\numberwithin{equation}{section}
\newtheorem{theorem}{Theorem}[section]
\newtheorem{assumption}{Assumption}
\newtheorem{lemma}[theorem]{Lemma}
\newtheorem{proposition}[theorem]{Proposition}
\newenvironment{proof}[1][Proof]{\textbf{#1.} }{\ \rule{0.5em}{0.5em}}
\DeclareMathOperator*{\esssup}{ess\,sup}
\DeclareMathOperator*{\argmax}{arg\,max}
\begin{document}

\title{\textbf{Optimal Switching at Poisson Random Intervention Times}
\footnote{Dedicated to Professor Lishang Jiang for his eightieth
birthday. The authors would like to express their deep gratitude to
Professor Jiang for his supervision when they were his students at
Tongji University. The work is partially supported by a start-up
research fund from King's College London, and the Oxford-Man
Institute, University of Oxford.}}

\author[$a$]{\small\textsc{{Gechun Liang\footnote{Corresponding author. Tel.: +44 020 7848 2633}}}}
\affil[$a$]{\small{\textsc{Department of Mathematics, King's College London, Strand, London, WC2R 2LS, U.K.}}\\

\texttt{gechun.liang@kcl.ac.uk}}\vspace{0.4cm}

\author[$b$]{\small\textsc{Wei Wei}}
\affil[$b$]{\small\textsc{{Mathematical Institute, University of Oxford, 24-29 St Giles, Oxford, OX1 3LB, U.K.}}\\

\texttt{wei.wei@maths.ox.ac.uk}}\vspace{0.4cm}

\maketitle
\begin{abstract}
This paper introduces a new class of optimal switching problems,
where the player is allowed to switch at a sequence of exogenous
Poisson arrival times, and the underlying switching system is
governed by an infinite horizon backward stochastic differential
equation system. The value function and the optimal switching
strategy are characterized by the solution of the underlying
switching system. In a Markovian setting, the paper gives a complete
description of the structure of switching
regions by means of the comparison principle.\\

\noindent\emph{Keywords:} Optimal switching, optimal stopping,
Poisson process,
infinite horizon BSDE system, ordinary differential equation system\\

\noindent\emph{Mathematical subject classifications (2000):} 60H10,
60G40, 93E20\\


\end{abstract}
\leftskip0truecm \rightskip0truecm
\newpage
\section{Introduction}

Optimal switching is the problem of determining an optimal sequence
of stopping times for a switching system, which is often modeled by
a stochastic process with several regimes. In this paper, we
introduce and solve a new class of optimal switching problems, where
the player is allowed to switch at a sequence of random times
generated by an exogenous Poisson process, and the underlying
switching system is governed by an infinite horizon backward
stochastic differential equation (BSDE) system.

As a special case of impulse control problems, optimal switching and
its connection with a system of variational inequalities were
extensively studied by Bensoussan and Lions \cite{Bensoussan} in a
Markovian setting, and later by Tang and Yong \cite{TangYong} using
the viscosity solution approach. The problem has drawn renewed
attention recently, due to its various applications in economics and
finance, ranging from firm's investment (see \cite{Oksendal,Zervos})
to real options (see \cite{Carmona, Hamadene1, Touzi}) and trend
following trading (see \cite{Dai}). All applications aim for
determining in an optimal way the sequence of stopping times at
which the player can enter or exit an economic activity, so optimal
switching (in the two-regime case) is also called the \emph{starting
and stopping problem}, or the \emph{reversible investment problem}.
Herein, the player could be the manager of a power plant, who needs
to decide when to produce electricity (if the profit generated from
operation is high), and when to close the power station (if the
profit generated from operation is low) in an optimal way.  For a
more recent development of optimal switching, we refer to Pham
\cite{Pham0} and related references therein.

In this paper, we consider a class of optimal switching problems
where the player is allowed to switch at a sequence of Poisson
arrival times instead of any stopping times. The underlying Poisson
process can be regarded as an exogenous constraint on the player's
ability to switch, so it may reflect the liquidity effect, i.e. the
Poisson process indicates the times at which the underlying system
is available to switch. On the other hand, the Poisson process can
also be seen as an information constraint. The player is allowed to
switch at all times, but she is only able to observe the switching
system at Poisson arrival times. Finally, our model can also be seen
as a randomized version of a discrete optimal switching problem.

In an optimal stopping time setting, a similar problem was firstly
introduced by Dupuis and Wang \cite{Dupuis}, where they used it to
model perpetual American options exercised at exogenous Poisson
arrival times, followed by Lempa \cite{Lempa}. Recently, Liang et al
\cite{Liang2} has established a connection between such kind of
optimal stopping at Poisson random intervention times and dynamic
bank run problems. In this sense, our result in Section 3 is a
generalization of Dupuis and Wang \cite{Dupuis}, Lempa \cite{Lempa}
and Liang et al \cite {Liang2} from optimal stopping to optimal
switching.

There are mainly two methods in the existing literature about how to
solve optimal switching problems. One method mainly focuses on
obtaining closed form solutions, in order to investigate the
structure of switching regions. For example, Brekke and Oksendal
\cite{Oksendal} and Duckworth and Zervos obtained the solutions by a
verification approach. Ly Vath and Pham \cite{LyVath} employed the
viscosity solution approach to determine an explicit solution in the
two-regime case, which was extended to multi-regime case in
\cite{Pham1}. Bayraktar and Egami \cite{Bayraktar} obtained an even
more general result by making an extensive use of one dimensional
diffusions. Notwithstanding, most of the results in this spectrum is
based on the assumption that the switching system is one dimensional
diffusion (such as one dimensional geometric Brownian motion) and
the time horizon is infinite. On the other hand, if the switching
system is governed by a more general stochastic process, such as
multi-dimensional diffusions or BSDEs, it is often a formidable task
to determine the structure of switching regions. In such a
situation, the attention more focuses on the characterization of the
value function and the optimal switching strategy, either by using a
system of variational inequalities as in Tang and Yong
\cite{TangYong}, or by using a system of reflected BSDEs such as in
Hamadene and Jeanblanc \cite{Hamadene1}, Hamadene and Zhang
\cite{Hamadene2}, and Hu and Tang \cite{HuTang}. Numerical solution
is therefore also an important aspect in this method (see Carmona
and Ludkovski \cite{Carmona} and Porchet et al \cite{Touzi}).
Finally, if the underlying switching system is modeled by
non-diffusive processes such as Markov chains, we refer to Bayraktar
and Ludkovski \cite{Bayraktar2}.

In this paper, we try to cover both spectra of the methods to tackle
our optimal switching problem. In Section 2, we present a general
optimal switching model, where the underlying switching system is
governed by an infinite horizon BSDE system. For a general
introduction of BSDEs, we refer to the seminal paper by Pardoux and
Peng \cite{PP}, and a follow-up survey paper by El Karoui et al
\cite{MR1434407}. See also the two monographs by Ma and Yong
\cite{MaYong} and Yong and Zhou \cite{MR1696772} with more
references therein.
Our main result in Section 2 is to show that if the underlying
switching system follows a ``penalized version'' of infinite horizon
BSDE system, then the value of the corresponding optimal switching
problem is nothing but the solution of this penalized equation (see
Theorem \ref{Theorem}). The basic observation comes from the optimal
stopping time representation for one dimensional penalized BSDE,
firstly discovered by Liang \cite{Liang} in a finite horizon case.
In this paper, we also prove an infinite horizon version in Lemma
\ref{lemma1}.

We take the other spectrum of the methods in Section 3, where we
work in a Markovian setting with one dimensional geometric Brownian
motion and two regimes. This simplification enables us to fully
describe the structure of switching regions (see Theorem
\ref{Theorem2}). The basic observation therein is that we can
consider the difference of the value functions for the two switching
regimes, and then employ the comparison principle for one
dimensional equation.\\

The paper is organized as follows: We present our general optimal
switching model in Section 2, and characterize the value of the
optimal switching problem and the associated optimal switching
strategy by the solution of an infinite horizon BSDE system. In
Section 3, we work out a specific example in a Markovian setting,
and give a complete description of the structure of switching
regions. All the technical details are provided in the Appendix.


\section{The Optimal Switching Model}

Let $(W_t)_{t\geq 0}$ be a $n$-dimensional standard Brownian motion
defined on a filtered probability space
$(\Omega,\mathcal{F},\mathbb{F}=\{\mathcal{F}_t\}_{t\geq
0},\mathbf{P})$, where the filtration $\mathbb{F}$ is the minimal
augmented Brownian filtration. For any fixed time $t\geq 0$, let
$\{T_n\}_{n\geq 0}$ be the arrival times of a Poisson process
$(N_s)_{s\geq t}$ with intensity $\lambda$ and minimal augmented
filtration $\{\mathcal{H}_s^{(t,\lambda)}\}_{s\geq t}$. We follow
the convention that $T_0=t$ and $T_{\infty}=\infty$, and throughout
this paper, we assume that the Brownian motion and the Poisson
process are independent. Given the parameter set $(t,\lambda)$, let
$\mathcal{G}_s^{(t,\lambda)}=\mathcal{F}_s\vee\mathcal{H}_s^{(t,\lambda)}$
so that $\mathcal{G}_t^{(t,\lambda)}=\mathcal{F}_t$, and
$\mathbb{G}^{(t,\lambda)}=\{\mathcal{G}_s^{(t,\lambda)}\}_{s\geq
t}$. Moreover, given the Poisson arrival time $T_n$, define
pre-$T_n$ $\sigma$-field:
$$\mathcal{G}_{T_n}^{(t,\lambda)}=\left\{A\in\bigvee_{s\geq t}\mathcal{G}_{s}^{(t,\lambda)}:
A\cap\{T_n\leq s\}\in\mathcal{G}_s^{(t,\lambda)}\ \text{for}\ s\geq
t.\right\}$$ for $n\geq 0$, and denote
$\tilde{\mathbb{G}}^{(t,\lambda)}=\{\mathcal{G}^{(t,\lambda)}_{T_n}\}_{n\geq
0}$.

Let the superscript $^*$ denote the matrix transpose, and $\cdot$
denote the inner product in $\mathbb{R}^d$ with the norm
$|y|=\sqrt{\sum|y_i|^2}$ for $y\in\mathbb{R}^d$. Denote
$||z||=\sqrt{Trace(zz^*)}$ for $z\in\mathbb{R}^{d\times n}$. For
$a\in\mathbb{R}$, let $\mathcal{S}^2_{a}(\mathbb{R}^d)$ be the space
of all $\mathbb{F}$-progressively measurable processes $Y$, valued
in $\mathbb{R}^d$, endowed with the norm:
$$||Y||^2_{\mathcal{S}_a^2}=\mathbf{E}\left[\sup_{t\geq 0}e^{2at}|Y_t|^2\right]<\infty.$$
Let $\mathcal{H}^2_{a}(\mathbb{R}^{d\times n})$ be the space of all
$\mathbb{F}$-progressively measurable processes $Z$, valued in
$\mathbb{R}^{d\times n}$, endowed with the norm:
$$||Z||^2_{\mathcal{H}^2_a}=\mathbf{E}\left[\int_0^{\infty}e^{2at}||Z_t||^2dt\right]<\infty.$$

\subsection{Infinite Horizon BSDE System}

We introduce the following infinite horizon BSDE system, which will
be used to characterize the value of the optimal switching problem
introduced in the next subsection,
\begin{eqnarray}\label{IBSDE1}
\left\{
 \begin{array}{ll}
Y_t^i=Y_T^{i}+\int_t^{T}f_s^i(Y_s,Z_s)+\lambda\max\left\{0,
\mathcal{M}Y_s^i-Y_s^i\right\}ds-\int_t^{T}Z_s^idW_s,\\[+0.2cm]
\lim_{T\uparrow\infty}\mathbf{E}\left[e^{2a T}|Y_T|^2\right]=0
\end{array}
\right.
\end{eqnarray}
for $0\leq t\leq T<\infty$ and $1\leq i\leq d$. The driver
$f_s=(f^1_s\cdots,f_s^d)^*$ and the parameter $a$ are the given
data, and the impulse term $\mathcal{M}Y_t^i$ is defined as
$$\mathcal{M}Y_t^i=\max_{j\neq i}\left\{Y_t^j-g^{ij}_t\right\}.$$
Note that (\ref{IBSDE1}) are nothing but penalized equations of multi-dimensional reflected
BSDEs. A solution to (\ref{IBSDE1}) is a pair of $\mathbb{F}$-progressively
measurable processes $(Y,Z)$ valued in
$\mathbb{R}^d\times\mathbb{R}^{d\times n}$. The solution $Y^i$
represents the payoff in regime $i$, and the impulse term
$\mathcal{M}Y_t^i$ represents the payoff if the player switches from
the current regime $i$ to regime $j$, where $g^{ij}_t$ is the
associate switching cost from $i$ and $j$.

We impose the following assumptions on the data of the infinite
horizon BSDE system (\ref{IBSDE1}).

\begin{assumption}\label{Assumption1} The driver
$f_s(y,z):
\Omega\times\mathbb{R}^+\times\mathbb{R}^d\times\mathbb{R}^{d\times
n }\rightarrow\mathbb{R}^d$ is monotone in $y$ and Lipschitz
continuous in $z$, i.e. there exist constants $a_1,a_2>0$ such that
\begin{align}
(y-\bar{y})^*(f_s(y,z)-f_s(\bar{y},z))&\leq -a_1|y-\bar{y}|^2,\\
|f_s(y,z)-f_s(y,\bar{z})|&\leq a_2||z-\bar{z}||
\end{align}
for any $y,\bar{y}\in\mathbb{R}^d$ and
$z,\bar{z}\in\mathbb{R}^{d\times n}$, and it has linear growth in
both components $(y,z)$. Moreover, the parameter $a$ satisfies the
following structure condition:
\begin{equation}\label{Structure}
a=-a_1+\frac{\delta}{2}a_2^2+(\frac{d+3}{2})\lambda
\end{equation}
for $\delta>1$ such that
\begin{equation}\label{IntegrableCondition}
\mathbf{E}\left[\int_0^{\infty}|f_s(0,0)|^2e^{2a s}ds\right]<\infty.
\end{equation}
\end{assumption}

It is known that structure conditions such as (\ref{Structure}) are
critical for solving infinite horizon BSDE systems. However, the
structure condition (\ref{Structure}) is slightly different from the
standard ones in Section 3 of Darling and Pardoux \cite{Darling} and
Section 2 of Briand and Hu \cite{Briand}: the additional term
$(\frac{d+3}{2})\lambda$ is due to the maximum term
$\lambda\max\left\{0, \mathcal{M}Y_t^i-Y_t^i\right\}$ in
(\ref{IBSDE1}).

The switching cost $g^{ij}$ satisfies the following assumption.
\begin{assumption}\label{Assumption2}
The switching cost $g^{ij}$ for $1\leq i,j\leq d$ is a bounded
$\mathbb{F}$-progressively measurable process valued in
$\mathbb{R}$, and satisfies (1) $g^{ii}_t=0$; (2) $\inf_{t\geq
0}g^{ij}_t+g^{ji}_t\geq C>0$ for $i\neq j$; and (3) $\inf_{t\geq
0}g_t^{ij}+g_t^{jl}-g_t^{il}\geq C>0$ for $i\neq j\neq l$.
\end{assumption}

\begin{proposition}\label{PropositionBSDE}
Suppose that Assumptions \ref{Assumption1} and \ref{Assumption2}
hold. Then there exists a unique solution
$(Y,Z)\in\mathcal{S}^2_{a}(\mathbb{R}^d)\times\mathcal{H}^2_{a}(\mathbb{R}^{d\times
n})$ to the infinite horizon BSDE system (\ref{IBSDE1}).
\end{proposition}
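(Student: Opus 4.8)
The plan is to establish existence and uniqueness via a fixed-point argument in the weighted spaces $\mathcal{S}^2_a(\mathbb{R}^d)\times\mathcal{H}^2_a(\mathbb{R}^{d\times n})$, treating the penalization term as part of an augmented driver. Define
$$F_s(y,z) = f_s(y,z) + \lambda\bigl(\max\{0,\mathcal{M}y^1-y^1\},\dots,\max\{0,\mathcal{M}y^d-y^d\}\bigr)^*,$$
so that (\ref{IBSDE1}) becomes a standard infinite-horizon BSDE $Y_t^i = Y_T^i + \int_t^T F_s^i(Y_s,Z_s)\,ds - \int_t^T Z_s^i\,dW_s$ with the terminal condition $\lim_{T\uparrow\infty}\mathbf{E}[e^{2aT}|Y_T|^2]=0$. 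First I would record the elementary Lipschitz estimate for the penalization: since $y\mapsto\max\{0,\mathcal{M}y^i-y^i\}$ is a composition of $1$-Lipschitz maps (maxima and coordinate differences), one gets $|\max\{0,\mathcal{M}y^i-y^i\}-\max\{0,\mathcal{M}\bar y^i-\bar y^i\}| \le |y^i-\bar y^i| + \max_{j\neq i}|y^j-\bar y^j| \le |y^i-\bar y^i|+|y-\bar y|$, hence the penalization is Lipschitz in $y$ with a constant controlled by $d$ (this is exactly the source of the $\frac{d+3}{2}\lambda$ correction in the structure condition (\ref{Structure})). Combined with Assumption \ref{Assumption1}, $F$ inherits a monotonicity-type estimate $(y-\bar y)^*(F_s(y,z)-F_s(\bar y,z)) \le (-a_1 + c_d\lambda)|y-\bar y|^2$ and Lipschitz continuity in $z$ with constant $a_2$, and linear growth follows from the linear growth of $f$ together with Assumption \ref{Assumption2} (boundedness of $g^{ij}$).

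Next I would invoke the infinite-horizon BSDE theory of Darling–Pardoux \cite{Darling} (or Briand–Hu \cite{Briand}): under a monotone-plus-Lipschitz driver satisfying a structure condition relating $a$ to $a_1,a_2$ and the monotonicity constant, together with the integrability condition (\ref{IntegrableCondition}) on $F_s(0,0)$ — which holds because $F_s(0,0) = f_s(0,0) + \lambda(\max\{0,-g^{1j}\},\dots)$ is bounded perturbation of $f_s(0,0)$, so (\ref{IntegrableCondition}) is equivalent for $f$ and $F$ — there exists a unique solution $(Y,Z)$ in the weighted spaces. The only point requiring care is that the effective monotonicity constant of $F$ is now $-a_1+c_d\lambda$ rather than $-a_1$, and one must check that the structure condition (\ref{Structure}) with the extra $\frac{d+3}{2}\lambda$ term is precisely what is needed so that $a$ lies in the admissible range demanded by the cited fixed-point theorem; a direct way to see this without recomputing their proof is to run the contraction argument on the map $(U,V)\mapsto(Y,Z)$ defined by solving the linear (driver-frozen) equation, and observe that Itô's formula applied to $e^{2at}|Y_t-\bar Y_t|^2$ produces the competing terms $-2a$, $2a_1$, $-a_2^2/\delta$ (from a Young's inequality splitting the $z$-term) and the penalization contribution, whose balance gives a strict contraction exactly when (\ref{Structure}) holds with $\delta>1$.

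**The main obstacle** is bookkeeping the constant in the penalization Lipschitz estimate carefully enough to justify the coefficient $\frac{d+3}{2}\lambda$ rather than some other multiple of $d\lambda$: the naive bound $|y-\bar y|$ for each term gives $d|y-\bar y|^2$ after summing $d$ coordinates, but the sharper split above, together with the $2\lambda\int e^{2at}|Y_s-\bar Y_s||\max\{\cdots\}-\max\{\cdots\}|\,ds$ term being handled by $2\lambda|a||b|\le \lambda|a|^2+\lambda|b|^2$ applied coordinatewise and then summed, is what produces the precise factor; getting this to match (\ref{Structure}) is the one genuinely delicate computation. Everything else — linear growth, the equivalence of (\ref{IntegrableCondition}) for $f$ and $F$, closedness of the weighted spaces, and the a priori estimate needed for uniqueness — is routine given the cited infinite-horizon BSDE results, so I would state the penalization estimate as a short lemma, verify the structure condition matches, and then quote \cite{Darling} to conclude.
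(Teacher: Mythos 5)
Your overall strategy --- absorb the penalization into an augmented driver $F$, record its Lipschitz estimate in $y$, check that the structure condition (\ref{Structure}) is exactly what the resulting constants require, and then appeal to the infinite-horizon/monotone-driver theory of \cite{Darling} and \cite{Briand} --- is the same idea the paper credits those references with, and your accounting for the $\frac{d+3}{2}\lambda$ correction (the split $|y^i-\bar y^i|+|\mathcal{M}y^i-\mathcal{M}\bar y^i|$ followed by the elementary inequality $2ab\le \delta^{-2}a^2+\delta^2 b^2$) matches the paper's computation (\ref{ITO2}) essentially line for line. Where you diverge is in how existence is actually produced: the paper does not quote the references (they treat random terminal times, so some adaptation is needed, which is why the paper supplies a proof) but instead truncates (\ref{IBSDE1}) to $[0,n]$ with zero terminal data and shows that the truncated solutions $(Y(n),Z(n))$ form Cauchy sequences in $\mathcal{S}^2_a(\mathbb{R}^d)\times\mathcal{H}^2_a(\mathbb{R}^{d\times n})$, the structure condition being used precisely to make the coefficient of $|Y_s(n)-Y_s(m)|^2$ nonpositive in the It\^o estimate; uniqueness is the same estimate applied to the difference of two solutions. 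The truncation route buys a self-contained argument at the cost of some extra bookkeeping of the tail terms supported on $\{s\ge m\}$.

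Two points in your proposal need repair. First, your fallback argument --- a contraction for the map $(U,V)\mapsto(Y,Z)$ obtained by freezing the driver at $(U,V)$ --- does not work as stated: Assumption \ref{Assumption1} makes $f$ only monotone and of linear growth in $y$, not Lipschitz in $y$, so the fully frozen map admits no Lipschitz estimate in the $y$-slot, and the $2a_1$ term in your list of ``competing terms'' only materializes when the $y$-argument of the driver is the unknown process itself. The correct Picard scheme (as in \cite{Darling,Briand}) freezes only the $z$-argument and solves a monotone BSDE at each step; alternatively one uses the paper's truncation, where the monotonicity acts directly on $Y(n)-Y(m)$. Second, your claim that the integrability condition (\ref{IntegrableCondition}) transfers from $f$ to $F$ because $F_s(0,0)-f_s(0,0)$ is bounded is only valid when $a<0$, or when all $g^{ij}\ge 0$ so that the penalization vanishes at the origin; Assumption \ref{Assumption2} permits negative switching costs, and a bounded, nonvanishing perturbation is not $e^{2as}$-square-integrable on $[0,\infty)$ when $a\ge 0$ --- precisely the regime the paper singles out as allowing the undiscounted problem $r=0$. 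You should therefore either restrict that equivalence claim or estimate the penalization against the difference of two candidate solutions (as the paper does) rather than against its value at $y=0$.
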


The proof essentially follows from Section 3 of Darling and Pardoux
\cite{Darling} and Section 2 of Briand and Hu \cite{Briand}, though
they consider the random terminal time. For completeness and
readers' convenience, we give the proof of Proposition
\ref{PropositionBSDE} in the Appendix.

We conclude this subsection by observing that $(Y,Z)$ solve
(\ref{IBSDE1}), if and only if the corresponding discounted
processes $(U_t^{i},V_t^{i})=(e^{rt}Y_t^i,e^{rt}Z_t^i)$ for $t\geq
0$, $1\leq i\leq d$ and $r\in\mathbb{R}$ solve the following
infinite horizon BSDE system:
\begin{eqnarray}\label{IBSDE1.0}
\left\{
 \begin{array}{ll}
U_t^{i}=U_T^{i}+\int_t^{T}\tilde{f}_s^{i}(U_s,V_s)+\lambda\max\left\{0,
\tilde{\mathcal{M}}U_t^{i}-U_t^{i}\right\}ds-\int_t^{T}V_s^{i}dW_s,\\[+0.2cm]
\lim_{T\uparrow\infty}\mathbf{E}\left[e^{2(a-r)T}|U_T|^2\right]=0,
\end{array}
\right.
\end{eqnarray}
where the driver
$\tilde{f}_s=(\tilde{f}_s^{1},\cdots,\tilde{f}_s^{d})^*$ is given by
$$\tilde{f}_s^{i}(y,z)=e^{rs}f_s^i(e^{-rs}y,e^{-rs}z)-ry^i$$
for $(y,z)\in\mathbb{R}^d\times\mathbb{R}^{d\times n}$, and the
impulse term $\tilde{\mathcal{M}}U_t^{i}$ is defined as
$$\tilde{\mathcal{M}}U_t^{i}=
\max_{j\neq i}\left\{U_t^{j}-e^{rt}g^{ij}_t\right\}.$$ Hence, as a
direct consequence of Proposition \ref{PropositionBSDE},
(\ref{IBSDE1.0}) admits a unique solution
$(U,V)\in\mathcal{S}^2_{a-r}(\mathbb{R}^d)\times\mathcal{H}^2_{a-r}(\mathcal{R}^{d\times
n})$.

\subsection{Optimal Switching Representation: Main Results}
Consider the following optimal switching problem: Given $d$
switching regimes, a player starts in regime $i\in\{1,\cdots,d\}$ at
any fixed time $t\geq 0$, and makes her switching decisions
sequentially at a sequence of Poisson arrival times $\{T_n\}_{n\geq
0}$ associated with the Poisson process $(N_s)_{s\geq t}$. Hence,
her switching decision at any time $s\geq t$ is represented as
\begin{equation}\label{control}
u_s=\alpha_0\mathbf{1}_{\{t\}}(s)+\sum_{k\geq
0}\alpha_{k}\mathbf{1}_{(T_k,T_{k+1}]}(s),
\end{equation}
where $(\alpha_k)_{k\geq 0}$ is a sequence of
$\mathcal{G}_{T_k}^{(t,\lambda)}$-measurable random variables valued
in $\{1,\cdots,d\}$, so they represent the regime that the player is
going to switching to at the Poisson arrival time $T_k$. Define the
control set $\mathcal{K}_i(t,\lambda)$ as
$$\mathcal{K}_i(t,\lambda)=\left\{
\mathbb{G}^{(t,\lambda)}\text{-progressively measurable process}\
(u_s)_{s\geq t}: u\ \text{has\ the\ form}\ (\ref{control})\
\text{with}\ \alpha_0=i\right\}.$$

The resulting expected payoff associated with any control
$u\in\mathcal{K}_i(t,s)$ is
$$\mathbf{E}\left[\int_t^{\infty}e^{r(s-t)}[f_s^{u_s}(Y_s,Z_s)-rY_s^i]ds-\sum_{k\geq 1}e^{r(T_k-t)}g^{\alpha_{k-1},\alpha_k}_{T_k}|\mathcal{G}^{(t,\lambda)}_t\right]$$
for any $r\leq a$, where the running payoff
$f_s=(f^1_s,\cdots,f^{d}_s)^*$ and the parameter $a$ satisfy
Assumption \ref{Assumption1}, with $(Y,Z)$ given as the solution of
the infinite horizon BSDE system (\ref{IBSDE1}), and the switching
cost $g^{i,j}$ satisfies Assumption \ref{Assumption2}. The player
maximize her expected payoff by choosing an optimal control
$u^*\in\mathcal{K}_i(t,\lambda)$:
\begin{equation}\label{optimalswitching}
y^{i,(t,\lambda)}_t=\esssup_{u\in\mathcal{K}_i(t,\lambda)}
\mathbf{E}\left[\int_t^{\infty}e^{r(s-t)}[f_s^{u_s}(Y_s,Z_s)-rY_s^i]ds-\sum_{k\geq
1}e^{r(T_k-t)}g^{\alpha_{k-1},\alpha_k}_{T_k}|\mathcal{G}^{(t,\lambda)}_t\right].
\end{equation}
Note that if $a\geq 0$, then $r$ can take value zero, and
(\ref{optimalswitching}) corresponds to a non-discounted optimal
switching problem. However, if $a<0$, then the discounting is
necessary, which is not the case for the finite horizon optimal
switching problem.

Our main result is the following representation result of the above
optimal switching problem, which is a counterparty of the finite
horizon case in Section 4 of Liang \cite{Liang}.

\begin{theorem}\label{Theorem}
Suppose that Assumptions \ref{Assumption1} and \ref{Assumption2}
hold. Let $(Y,Z)$ be the unique solution to the infinite horizon
BSDE system (\ref{IBSDE1}). Then the value of the optimal switching
problem (\ref{optimalswitching}) is given by
$y^{i,(t,\lambda)}_t=Y_t^i$, $a.s.$ for $t\geq 0$, and the optimal
switching strategy is  $\tau_0^{*}=t$ and $\alpha_0^{*}=i$,
\begin{equation}\label{optimalswitching1.1}
\tau^*_{k+1}=\inf\left\{T_N> \tau^*_{k}:
Y_{T_N}^{\alpha_k^*}\leq\mathcal{M}Y_{T_N}^{\alpha_k^*}\right\}
\end{equation}
where
$$\alpha_{k+1}^*=\argmax_{j\neq \alpha_{k}^*}\left\{
Y^{j}_{\tau_{k+1}^{*}}-g^{\alpha_k^*,j}_{\tau_{k+1}^{*}}\right\}$$
for $k\geq 0$. Hence, the optimal switching strategy at any time
$s\geq t$ is
$$u^{*}_s=
i\mathbf{1}_{\{t\}}(s)+\sum_{k\geq 0}\alpha_k^{*}
\mathbf{1}_{(\tau_{k}^*,\tau_{k+1}^*]}(s).$$
\end{theorem}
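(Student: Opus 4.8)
The plan is to verify the representation $y^{i,(t,\lambda)}_t = Y_t^i$ by a two-sided argument: first show that every admissible control $u\in\mathcal{K}_i(t,\lambda)$ yields an expected payoff no larger than $Y_t^i$, and then show that the specific strategy $u^*$ described in the theorem attains $Y_t^i$ exactly. The bridge between the switching system \eqref{IBSDE1} and the switching problem is the optimal stopping representation for one-dimensional penalized BSDEs established in Lemma \ref{lemma1}: for a fixed regime index and a frozen ``obstacle'' process, the penalized equation $\lambda\max\{0,\mathcal{M}Y^i-Y^i\}$ can be rewritten as an optimal stopping problem where the stopping times are restricted to Poisson arrival times $T_N$. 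I would first pass to the discounted formulation \eqref{IBSDE1.0}, so that the $-rY^i$ term is absorbed into the driver $\tilde f$ and the payoff functional becomes the cleaner expression $\mathbf{E}[\int_t^\infty \tilde f_s^{u_s}(U_s,V_s)\,ds - \sum_{k\ge1} e^{r T_k}g^{\alpha_{k-1}\alpha_k}_{T_k}\mid\mathcal{F}_t]$; this is just notation-chasing and I would not dwell on it.

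The core of the upper bound is a verification/supermartingale argument along an arbitrary control. Fix $u\in\mathcal{K}_i(t,\lambda)$ with associated regimes $(\alpha_k)$ and Poisson times $(T_k)$. On each stochastic interval $(T_k, T_{k+1}]$ the player sits in regime $\alpha_k$, so I would apply Lemma \ref{lemma1} on that interval: the solution component $Y^{\alpha_k}$ dominates the value of ``stop at the first future Poisson time and collect the switched payoff, or continue,'' which after summing over $k$ and telescoping the switching costs (using $\mathcal{M}Y^i_{T_{k+1}} \ge Y^{\alpha_{k+1}}_{T_{k+1}} - g^{\alpha_k\alpha_{k+1}}_{T_{k+1}}$ by definition of $\mathcal{M}$) gives $Y_t^i \ge \mathbf{E}[\,\cdots \mid \mathcal{F}_t]$ for the payoff of $u$. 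Concretely I would define the process that equals, on $(T_k,T_{k+1}]$, the discounted running payoff accumulated so far plus $e^{rs}U_s^{\alpha_k}$ minus the accumulated discounted switching costs, and check it is a $\mathbb{G}^{(t,\lambda)}$-supermartingale; the penalization term $\lambda\max\{0,\cdot\}$ together with the compensator $\lambda\,ds$ of the Poisson process is exactly what makes the jump part of its Doob–Meyer decomposition nonpositive. Taking $T\uparrow\infty$ and using the terminal condition $\lim_T \mathbf{E}[e^{2(a-r)T}|U_T|^2]=0$ together with the $\mathcal{S}^2_{a-r}$ and $\mathcal{H}^2_{a-r}$ integrability to kill the boundary term yields $y^{i,(t,\lambda)}_t \le Y_t^i$.

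For the lower bound I would run the same computation with $u=u^*$, checking that every inequality above becomes an equality: on $(\tau_k^*,\tau_{k+1}^*]$ the stopping rule \eqref{optimalswitching1.1} is precisely the optimal stopping rule from Lemma \ref{lemma1} (stop at the first Poisson time where $Y^{\alpha_k^*}\le\mathcal{M}Y^{\alpha_k^*}$, i.e. where the penalization is active), and the choice $\alpha_{k+1}^* = \argmax_{j\ne\alpha_k^*}\{Y^j_{\tau_{k+1}^*}-g^{\alpha_k^*,j}_{\tau_{k+1}^*}\}$ makes $\mathcal{M}Y^{\alpha_k^*}_{\tau_{k+1}^*} = Y^{\alpha_{k+1}^*}_{\tau_{k+1}^*}-g^{\alpha_k^*,\alpha_{k+1}^*}_{\tau_{k+1}^*}$ with equality. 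One must also check that $u^*$ is genuinely admissible — the $\tau_k^*$ are Poisson arrival times and $\alpha_{k+1}^*$ is $\mathcal{G}^{(t,\lambda)}_{\tau_{k+1}^*}$-measurable, which is immediate — and that infinitely many switches do not accumulate extra cost in the limit; here Assumption \ref{Assumption2}(2), $g^{ij}+g^{ji}\ge C>0$, forces the number of switches up to any fixed time to have finite expectation (a standard argument: each round trip costs at least $C$, but the gain is bounded by the $\mathcal{S}^2$ norms), so the series $\sum_k e^{rT_k}g^{\alpha_{k-1}^*\alpha_k^*}_{T_k}$ converges in $L^1$ and the $T\uparrow\infty$ passage goes through. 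I expect the main obstacle to be exactly this last point — controlling the number of switches and justifying the limit $T\uparrow\infty$ in the infinite-horizon setting — since unlike the finite-horizon case of Liang \cite{Liang} there is no terminal time to stop the recursion, and one must lean on the structure condition \eqref{Structure} and the no-loop condition in Assumption \ref{Assumption2} to get the necessary uniform integrability.
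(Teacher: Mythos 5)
Your proposal follows essentially the same route as the paper: pass to the discounted system \eqref{IBSDE1.0}, obtain the upper bound by iterating the Poisson-constrained optimal stopping representation of Lemma \ref{lemma1} along an arbitrary control and telescoping the switching costs, and obtain the lower bound by checking that each inequality becomes an equality along $u^*$ (your supermartingale phrasing is just the continuous-time repackaging of that same iteration). The passage $M\uparrow\infty$ is handled in the paper exactly as you anticipate, via $T_n\to\infty$ a.s. and the $L^2$ convergence $\mathbf{E}\left[|U_{T}|^2\right]\to 0$ coming from the terminal condition and $r\leq a$.
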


To prove Theorem \ref{Theorem}, we first observe that the solution
$Y^{i}_t$  to (\ref{IBSDE1}) is the value of the optimal switching
problem  (\ref{optimalswitching}) with the associated optimal
switching strategy $u^*$, if and only if the solution
$U^{i}_t=e^{rt}Y^{i}_t$ to (\ref{IBSDE1.0}) is the value of the
following optimal switching problem (without discounting):
\begin{equation}\label{optimalswitching111}
\esssup_{u\in\mathcal{K}_i(t,\lambda)}
\mathbf{E}\left[\int_t^{\infty}\tilde{f}_s^{u_s}(U_s,V_s)ds-\sum_{k\geq
1}e^{rT_k}g^{\alpha_{k-1},\alpha_k}_{T_k}|\mathcal{G}^{(t,\lambda)}_t\right]
\end{equation}
with the optimal switching strategy $\tau^*_0=t$ and $\alpha_0^*=i$,
\begin{equation}\label{optimalswitching111.1}
\tau^*_{k+1}=\inf\left\{T_N> \tau^*_{k}:
U_{T_N}^{\alpha_k^*}\leq\tilde{\mathcal{M}}U_{T_N}^{\alpha_k^*}\right\}
\end{equation}
where
$$\alpha_{k+1}^*=\argmax_{j\neq \alpha_{k}^*}\left\{
U^{j}_{\tau_{k+1}^{*}}-e^{r\tau_{k+1}^*}g^{\alpha_k^*,j}_{\tau_{k+1}^{*}}\right\}.$$

From now on, we will mainly work with the formulation
(\ref{optimalswitching111}), and prove that its value is given by
$U^{i}_t$. The proof crucially depends on the following lemma about
the optimal stopping time representation for the infinite horizon
BSDE system (\ref{IBSDE1.0}), whose proof since quite long, is
postponed to the next subsection. The new feature of this optimal
stopping time problem is that the player is only allowed to stop at
exogenous Poisson arrival times. Such kind of optimal stopping was
first introduced by Dupuis and Wang \cite{Dupuis} in a Markovian
setting.

\begin{lemma}\label{lemma1}
Suppose that Assumptions \ref{Assumption1} and \ref{Assumption2}
hold. Let $(U,V)$ be the unique solution to the infinite horizon
BSDE system (\ref{IBSDE1.0}). For $n\geq 0$ and $1\leq i\leq d$,
consider the following auxiliary optimal stopping time problem:
\begin{equation}\label{optimalstopping}
\tilde{y}_{T_n}^{i,(t,\lambda)}=\esssup_{\tau\in\mathcal{R}_{T_{n+1}}{(t,\lambda)}}
\mathbf{E}\left[\int_{T_n}^{\tau}\tilde{f}_s^{i}(U_s,V_s)ds+
\tilde{\mathcal{M}}U^{i}_{\tau}|\mathcal{G}_{T_n}^{(t,\lambda)}\right],
\end{equation}
where the control set $\mathcal{R}_{T_{n+1}}{(t,\lambda)}$ is
defined as
$$\mathcal{R}_{T_{n+1}}{(t,\lambda)}=\left\{\mathbb{G}^{(t,\lambda)}\text{-stopping\ time}\ \tau\ \text{for}\ \tau(\omega)=T_k(\omega)
\ \text{where}\ k\geq {n+1}\right\}.$$ Then its value is given by
$\tilde{y}_{T_n}^{i,(t,\lambda)}=U^{i}_{T_n}$, $a.s.$ for $n\geq0$,
and in particular, $\tilde{y}_t^{i,(t,\lambda)}=U_t^{i}$, $a.s.$ for
$t\geq 0$. The optimal stopping time is given by
$$\tau_{T_{n+1}}^*=\inf\left\{T_k\geq T_{n+1}: U^{i}_{T_k}\leq \tilde{\mathcal{M}}U^{i}_{T_k}\right\}.$$
\end{lemma}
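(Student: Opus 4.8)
The plan is to compress the whole Poisson optimal stopping problem into a single one-step identity for the solution of (\ref{IBSDE1.0}) over one Poisson interval, and then to iterate it. Fix $i$ and abbreviate $\Phi_s:=\tilde{\mathcal{M}}U_s^{i}-U_s^{i}$, so that the penalty in (\ref{IBSDE1.0}) is $\lambda\Phi_s^{+}$ and $\max\{\tilde{\mathcal{M}}U_s^{i},U_s^{i}\}=U_s^{i}+\Phi_s^{+}$. The first and central step is to prove that for every $n\geq 0$
$$U_{T_n}^{i}=\mathbf{E}\left[\int_{T_n}^{T_{n+1}}\tilde{f}_s^{i}(U_s,V_s)\,ds+U_{T_{n+1}}^{i}+\Phi_{T_{n+1}}^{+}\,\Big|\,\mathcal{G}_{T_n}^{(t,\lambda)}\right].$$
I would obtain this by writing (\ref{IBSDE1.0}) for $U^{i}$ on the stochastic interval $[T_n,T_{n+1}]$ and taking $\mathbf{E}[\,\cdot\,|\,\mathcal{G}_{T_n}^{(t,\lambda)}]$. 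The Brownian stochastic integral $\int_{T_n}^{T_{n+1}}V_s^{i}\,dW_s$ vanishes by optional sampling, since $V^{i}\in\mathcal{H}^2_{a-r}$ with $a-r\geq 0$ makes $\int_0^{\cdot}V_s^{i}\,dW_s$ a uniformly integrable $\mathbb{G}^{(t,\lambda)}$-martingale, the Poisson process being independent of $W$. The crux is the compensation identity
$$\mathbf{E}\left[\int_{T_n}^{T_{n+1}}\lambda\Phi_s^{+}\,ds\,\Big|\,\mathcal{G}_{T_n}^{(t,\lambda)}\right]=\mathbf{E}\left[\Phi_{T_{n+1}}^{+}\,\big|\,\mathcal{G}_{T_n}^{(t,\lambda)}\right];$$
conditioning first on $\mathcal{G}_{T_n}^{(t,\lambda)}\vee\mathcal{F}_\infty$ — given which $T_{n+1}-T_n$ is $\mathrm{Exp}(\lambda)$, independent, while $\Phi$ and $T_n$ are known — both sides equal $\int_0^\infty\lambda e^{-\lambda u}\Phi_{T_n+u}^{+}\,du$ by Fubini. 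Equivalently, $\int_0^{\cdot}\lambda\Phi_s^{+}\,ds$ is the $\mathbb{G}^{(t,\lambda)}$-compensator of $\sum_k\Phi_{T_k}^{+}\mathbf{1}_{\{T_k\leq\,\cdot\,\}}$, which is the precise ``penalization versus Poisson stopping'' mechanism underlying the lemma.

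With the one-step identity in hand, the two inequalities are iterations of it. For $U_{T_n}^{i}\geq\tilde{y}_{T_n}^{i,(t,\lambda)}$, take any $\tau\in\mathcal{R}_{T_{n+1}}(t,\lambda)$ and use $U_{T_k}^{i}+\Phi_{T_k}^{+}\geq\mathbf{1}_{\{\tau=T_k\}}\tilde{\mathcal{M}}U_{T_k}^{i}+\mathbf{1}_{\{\tau>T_k\}}U_{T_k}^{i}$, together with $\{\tau>T_k\}\in\mathcal{G}_{T_k}^{(t,\lambda)}$, to propagate the identity through $T_n,T_{n+1},\dots$ via the tower property, obtaining for every $m$
$$U_{T_n}^{i}\geq\mathbf{E}\left[\int_{T_n}^{\tau\wedge T_m}\tilde{f}_s^{i}(U_s,V_s)\,ds+\mathbf{1}_{\{\tau\leq T_m\}}\tilde{\mathcal{M}}U_\tau^{i}+\mathbf{1}_{\{\tau>T_m\}}U_{T_m}^{i}\,\Big|\,\mathcal{G}_{T_n}^{(t,\lambda)}\right].$$
Letting $m\to\infty$ and then taking the essential supremum over $\tau$ yields $U_{T_n}^{i}\geq\tilde{y}_{T_n}^{i,(t,\lambda)}$. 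For the reverse inequality, plug in $\tau^{*}:=\inf\{T_k\geq T_{n+1}:U_{T_k}^{i}\leq\tilde{\mathcal{M}}U_{T_k}^{i}\}$: for $T_{n+1}\leq T_k<\tau^{*}$ one has $\Phi_{T_k}<0$, hence $U_{T_k}^{i}+\Phi_{T_k}^{+}=U_{T_k}^{i}$, whereas $U_{\tau^{*}}^{i}+\Phi_{\tau^{*}}^{+}=\tilde{\mathcal{M}}U_{\tau^{*}}^{i}$; therefore the same iteration holds with equalities throughout, and the limit gives $U_{T_n}^{i}=\mathbf{E}[\int_{T_n}^{\tau^{*}}\tilde{f}_s^{i}(U_s,V_s)\,ds+\tilde{\mathcal{M}}U_{\tau^{*}}^{i}\,|\,\mathcal{G}_{T_n}^{(t,\lambda)}]$. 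Hence $\tilde{y}_{T_n}^{i,(t,\lambda)}=U_{T_n}^{i}$ with $\tau^{*}$ optimal; the case $n=0$, $T_0=t$ is the stated special case, and $\tau^{*}\in\mathcal{R}_{T_{n+1}}(t,\lambda)$ since it is a $\mathbb{G}^{(t,\lambda)}$-stopping time valued in $\{T_k:k\geq n+1\}\cup\{\infty\}$.

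I expect two points to need genuine care. The first is the compensation identity together with the interchange of conditional expectation and the Brownian stochastic integral over the random horizon $[T_n,T_{n+1}]$; this is exactly where the weighted spaces $\mathcal{S}^2_{a-r}$, $\mathcal{H}^2_{a-r}$ and the sign condition $r\leq a$ are used, as they make $\int V^{i}\,dW$ a true uniformly integrable martingale and keep all the time integrals finite (via the linear growth of $\tilde f$ and (\ref{IntegrableCondition})). The second is the passage $m\to\infty$ in both iterations: one needs $\mathbf{1}_{\{\tau>T_m\}}U_{T_m}^{i}\to 0$ and $\int_{T_n}^{\tau\wedge T_m}\tilde{f}_s^{i}\,ds\to\int_{T_n}^{\tau}\tilde{f}_s^{i}\,ds$ in $L^1$, which follow from $\sup_t|U_t^{i}|\in L^2$, the decay of $U^{i}$ at infinity forced by the terminal condition in (\ref{IBSDE1.0}), $T_m\uparrow\infty$ a.s., and dominated convergence; on $\{\tau=\infty\}$ the stopping payoff $\tilde{\mathcal{M}}U_\tau^{i}$ is read as the a.s.\ limit $0$, consistently with the convention $T_\infty=\infty$.
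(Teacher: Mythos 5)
Your proof is correct and takes essentially the same route as the paper: your one-step identity is exactly the paper's recursive formula (\ref{recursiveBSDE}) (the compensation identity you isolate is the same computation the paper performs via It\^o's formula applied to $e^{-\lambda t}U^i_t$ and integration by parts against the conditional exponential density of $T_{n+1}-T_n$), and your iteration of that identity is the paper's Snell-envelope argument (Lemma \ref{lemma2}) unrolled by hand, with the same optimal stopping rule. The integrability caveats you flag (uniform integrability of $\int V^i\,dW$ over the random horizon, and the passage $m\to\infty$ using the terminal condition and $r\leq a$) are treated at the same level of detail in the paper.
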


Let us acknowledge the above lemma for the moment, and proceed to
prove Theorem \ref{Theorem}.


\begin{proof} For any switching strategy
$u\in\mathcal{K}_i(t,\lambda)$ with the form:
$$u_s=i\mathbf{1}_{\{t\}}(s)+\sum_{k\geq
0}\alpha_{k}\mathbf{1}_{(T_k,T_{k+1}]}(s),$$ we consider the
auxiliary optimal stopping time problem (\ref{optimalstopping})
starting from $T_0=t$, stopping at the first Poisson arrival time
$T_1$, and switching to $\alpha_1$:
\begin{equation}\label{firstswitch}
\tilde{y}^{i,(t,\lambda)}_t\geq
\mathbf{E}\left[\int_t^{T_1}\tilde{f}_s^{i}(U_s,V_s)ds+U^{\alpha_1}_{T_1}-e^{rT_1}g^{i,\alpha_1}_{T_1}
|\mathcal{G}_t^{(t,\lambda)}\right].
\end{equation}

Thanks to Lemma \ref{lemma1},  the value of the optimal stopping
time problem (\ref{optimalstopping}) starting from $T_1$ is given by
$\tilde{y}^{\alpha_1,(t,\lambda)}_{T_1}=U^{\alpha_1}_{T_1}$. We
consider such an optimal stopping time problem stopping at the
Poisson arrival time $T_2$, and switching to $\alpha_2$:
\begin{equation}\label{secondswitch}
U^{\alpha_1}_{T_1}=\tilde{y}_{T_1}^{\alpha_1,(t,\lambda)}\geq
\mathbf{E}\left[\int_{T_1}^{T_2}\tilde{f}_s^{\alpha_1}(U_s,V_s)ds+
U^{\alpha_2}_{T_2}-e^{rT_2}g^{\alpha_1,\alpha_2}_{T_2}
|\mathcal{G}_{T_1}^{(t,\lambda)}\right].
\end{equation}
By plugging (\ref{secondswitch}) into (\ref{firstswitch}), we obtain
\begin{equation*}
\tilde{y}_{t}^{i,(t,\lambda)}\geq
\mathbf{E}\left[\int_t^{T_1}\tilde{f}_s^{i}(U_s,V_s)ds+
\int_{T_1}^{T_2}\tilde{f}_s^{\alpha_1}(U_s,V_s)ds-
e^{rT_1}g_{T_1}^{i,\alpha_1}-e^{rT_2}
g_{T_2}^{\alpha_1,\alpha_2}+U_{T_2}^{\alpha_2}|\mathcal{G}_t^{(t,\lambda)}\right].
\end{equation*}
We repeat the above procedure $M$ times, and obtain
\begin{align*}
\tilde{y}_{t}^{i,(t,\lambda)}&\geq
\mathbf{E}\left[\sum_{k=0}^{M}\left(\int_{T_k}^{T_{k+1}}
\tilde{f}_s^{\alpha_{k}}(U_s,V_s)ds-e^{rT_{k+1}}g_{T_{k+1}}^{\alpha_{k},\alpha_{k+1}}
\right)+U^{\alpha_{M+1}}_{T_{M+1}}|\mathcal{G}_t^{(t,\lambda)}\right]\\
&=\mathbf{E}\left[\int_t^{T_{M+1}}\tilde{f}^{u_s}_s(U_s,V_s)ds-\sum_{k=
1}^{M+1}e^{rT_{k}}g_{T_k}^{\alpha_{k-1},\alpha_k}+U^{\alpha_{M+1}}_{T_{M+1}}|\mathcal{G}^{(t,\lambda)}_t\right].
\end{align*}

Since $\mathbf{P}\{\omega: T_{n}(\omega)<\infty\ \text{for\ all}\
n\geq 0\}=1$, the player actually only makes finite number of
switching decisions on $[0,\infty)$, i.e. the switching strategy is
finite. Recall that the solution $U_T$ converges to zero in $L^2$ as
$r\leq a$:
$$\lim_{T\uparrow{\infty}}\mathbf{E}[|U_T|^2]\leq
\lim_{T\uparrow{\infty}}\mathbf{E}[e^{2(a-r)T}|U_T|^2]=
\lim_{T\uparrow{\infty}}\mathbf{E}[e^{2aT}|Y_T|^2]=0.$$ Hence,
letting $M\uparrow\infty$, we get
$$\tilde{y}^{i\,(t,\lambda)}_t\geq\mathbf{E}\left[
\int_t^{\infty}\tilde{f}^{u_s}_s(U_s,V_s)ds-\sum_{k\geq
1}e^{rT_k}g_{T_k}^{\alpha_{k-1},\alpha_k}|\mathcal{G}^{(t,\lambda)}_t\right].$$
Taking the supremum over $u\in\mathcal{K}_i(t,\lambda)$ and using
Lemma \ref{lemma1} once again, we derive that
$$U_t^{i}=\tilde{y}^{i\,(t,\lambda)}_t\geq
\sup_{u\in\mathcal{K}_i(t,\lambda)}\mathbf{E}\left[
\int_t^{\infty}\tilde{f}^{u_s}_s(U_s,V_s)ds-\sum_{k\geq
1}e^{rT_k}g_{T_k}^{\alpha_{k-1},\alpha_k}|\mathcal{G}^{(t,\lambda)}_t\right]
.$$

We prove the reverse inequality by considering the switching
strategy $u=u^*$ as defined in (\ref{optimalswitching111.1}). From
Lemma \ref{lemma1}, $\tau_1^*$ is the optimal stopping time for
$\tilde{y}_t^{i,(t,\lambda)}$. By the definition of $\alpha_{1}^*$,
$$\tilde{\mathcal{M}}U^{i}_{\tau_1^*}=
\max_{j\neq
i}\{U_{\tau_1^*}^{j}-e^{r\tau_1^*}g_{\tau_1^*}^{i,j}\}=U^{\alpha_1^*}_{\tau_1^*}-e^{r\tau_1^*}g_{\tau_1^*}^{i,\alpha_{1}^*}.$$
Therefore,
\begin{equation}\label{firstoptimalswitch}
\tilde{y}^{i,(t,\lambda)}_t=\mathbf{E}\left[
\int_t^{\tau_1^*}\tilde{f}_s^{i}(U_s,V_s)ds+U^{\alpha_1^*}_{\tau_1^*}-e^{r\tau_1^*}g^{i,\alpha_1^*}_{\tau_1^*}
|\mathcal{G}_t^{(t,\lambda)}\right].
\end{equation}
Similarly, $\tau^*_2$ is the optimal stopping time for
$\tilde{y}_{\tau_{1}^*}^{\alpha_1^*,(t,\lambda)}=U_{\tau_{1}^*}^{\alpha_1^*}$,
and
$$\tilde{\mathcal{M}}U^{\alpha_1^*}_{\tau_2^*}=
\max_{j\neq
\alpha_1^*}\{U_{\tau_2^*}^{j}-e^{r\tau_2^*}g_{\tau_2^*}^{\alpha_1^*,j}\}=
U^{\alpha_2^*}_{\tau_2^*}-e^{r\tau_{2}^*}g_{\tau_2^*}^{\alpha_{1}^*,\alpha_2^*}.$$
Hence,
\begin{equation}\label{secondoptimalswitch}
U_{\tau_{1}^*}^{\alpha_1^*}=\tilde{y}^{\alpha_1^*,(t,\lambda)}_{\tau_1^*}=
\mathbf{E}\left[\int_{\tau_1^*}^{\tau_2^*}\tilde{f}_s^{\alpha_1^*}(U_s,V_s)ds+U^{\alpha_2^*}_{\tau_2^*}-
e^{r\tau_2^*}g^{\alpha_1^*,\alpha_2^*}_{\tau_2^*}
|\mathcal{G}_{\tau_1^*}^{(t,\lambda)}\right].
\end{equation}
Plugging (\ref{secondoptimalswitch}) into (\ref{firstoptimalswitch})
gives us
$$\tilde{y}^{i,(t,\lambda)}_t=\mathbf{E}\left[
\int_t^{\tau_1^*}\tilde{f}_s^i(U_s,V_s)ds+\int_{\tau_1^*}^{\tau_2^*}\tilde{f}_s^{\alpha_1^*}(U_s,V_s)ds
-e^{r\tau_1^*}g^{i,\alpha_1^*}_{\tau_1^*}-e^{r\tau_2^*}g^{\alpha_1^*,\alpha_2^*}_{\tau_2^*}
+U^{\alpha_2^*}_{\tau_2^*}|\mathcal{G}_t^{(t,\lambda)}\right]$$

We repeat the above procedure as many times as necessary, and obtain
for any $M\geq 0$,
\begin{align*}
\tilde{y}_{t}^{i,(t,\lambda)}&=
\mathbf{E}\left[\sum_{k=0}^{M}\left(\int_{\tau_k^*}^{\tau^*_{k+1}}
\tilde{f}_s^{\alpha_{k}^*}(U_s,V_s)ds-e^{r\tau_{k+1}^*}g_{\tau^*_{k+1}}^{\alpha_{k}^*,\alpha_{k+1}^*}
\right)+U_{T_{M+1}}^{\alpha_{M+1}}|\mathcal{G}_t^{(t,\lambda)}\right]\\
&=\mathbf{E}\left[
\int_t^{\tau^*_{M+1}}\tilde{f}^{u_s^*}_s(U_s,V_s)ds-\sum_{k=
1}^{M+1}e^{r\tau_k^*}g_{\tau^*_k}^{\alpha_{k-1}^*,\alpha_k^*}+U_{T_{M+1}}^{\alpha_{M+1}}|\mathcal{G}^{(t,\lambda)}_t\right].
\end{align*}

Since the switching strategy is finite and $U_T$ converges to zero
in $L^2$, we conclude $\tilde{y}_{t}^{i,(t,\lambda)}\leq
y_t^{i,(t,\lambda)}$ by taking $M\uparrow\infty$. Then Lemma
\ref{lemma1} implies that $U_t^i=\tilde{y}_{t}^{i,(t,\lambda)}\leq
y_t^{i,(t,\lambda)}$, and $u^*$ is the optimal switching strategy
for the optimal switching problem (\ref{optimalswitching111}).
\end{proof}

\subsection{Optimal Stopping Representation: Proof of Lemma \ref{lemma1}}

The proof is adapted from the proof of Theorem 1.2 in Liang
\cite{Liang}, where a finite horizon problem was considered.

First, we introduce an equivalent formulation of the optimal
stopping time problem (\ref{optimalstopping})

\begin{equation}\label{optimalstopping1}
\tilde{y}_{T_n}^{i,(t,\lambda)}=
\esssup_{N\in\mathcal{N}_{{n+1}}{(t,\lambda)}}
\mathbf{E}\left[\int_{T_n}^{T_{N}}\tilde{f}_s^i(U_s,V_s)ds+
\tilde{\mathcal{M}}U^i_{T_N}|\mathcal{G}_{T_n}^{(t,\lambda)}\right],
\end{equation}
where the control set $\mathcal{N}_{T_{n+1}}{(t,\lambda)}$ is
defined as
$$\mathcal{N}_{{n+1}}{(t,\lambda)}=\left\{\tilde{\mathbb{G}}^{(t,\lambda)}\text{-stopping\ time}\ N\ \text{for}\
N\geq {n+1}\right\}.$$ Notice that (\ref{optimalstopping1}) is a
discrete optimal stopping problem, as the player is allowed to stop
at a sequence of integers $n+1,n+2,\cdots.$  The optimal stopping
time is then some integer-valued random variable $N^{*}_{n+1}$:
$$N^*_{n+1}=\inf\left\{k\geq n+1: U^{i}_{T_k}\leq \tilde{\mathcal{M}}U^i_{T_{k}}\right\}.$$

We will mainly work with the formulation (\ref{optimalstopping1})
from now on. The proof is based on two observations. The first
observation is that the solution to the infinite horizon BSDE system
(\ref{IBSDE1.0}) on the Poisson arrival time $T_n$ can be calculated
recursively as follows:
\begin{equation}\label{recursiveBSDE}
U^i_{T_n}=\mathbf{E}\left[\int_{T_n}^{T_{n+1}}\tilde{f}_s^i(U_s,V_s)ds+\max\{\tilde{\mathcal{M}}U^i_{T_{n+1}},U^i_{T_{n+1}}\}|\mathcal{G}_{T_n}^{(t,\lambda)}\right].
\end{equation}

Indeed, by applying It\^o's formula to $e^{-\lambda t}U_t^{i}$, we
obtain for any $T\geq T_n$,
\begin{align*}
e^{-\lambda T_n}U_{T_n}^{i}=e^{-\lambda
T}U_T^{i}+\int_{T_n}^{T}e^{-\lambda s}\left(\tilde{f}_s^i(U_s,V_s)+
\lambda\max\{\tilde{\mathcal{M}}U_s^{i},U_s^{i}\}\right)ds-\int_{T_n}^{T}e^{-\lambda
s}V_s^{i}dW_s,
\end{align*}
so that
\begin{equation}\label{DPEforBSDE2}
U_{T_n}^{i}=\mathbf{E}\left[e^{-\lambda(T-{T_n})}U_{T}^i+\int_{T_n}^Te^{-\lambda(s-T_n)}\left(\tilde{f}_s^i(U_s,V_s)+
\lambda\max\left\{\tilde{\mathcal{M}}U_s^{i},U_{s}^{i}\right\}\right)ds
|\mathcal{G}_{T_n}^{(t,\lambda)}\right].
\end{equation}

Next, we use integration by parts and the conditional density
$\lambda e^{-\lambda (x-T_n)}dx$ of $T_{n+1}-T_{n}$ to simplify
(\ref{DPEforBSDE2}):
\begin{align*}
&\ \mathbf{E}\left[\int_{T_n}^Te^{-\lambda(s-T_n)}\tilde{f}_s^i(U_s,V_s)ds|\mathcal{G}_{T_n}^{(t,\lambda)}\right]\\
=&\
\mathbf{E}\left[e^{-\lambda(T-T_n)}\int_{T_n}^T\tilde{f}_s^i(U_s,V_s)ds+\int_{T_n}^T\lambda
e^{-\lambda(s-T_n)}(\int_{T_n}^s\tilde{f}_u^i(U_u,V_u)du)ds|\mathcal{G}_{T_n}^{(t,\lambda)}\right]\\
=&\ \mathbf{E}\left[\int_{T_n}^{T_{n+1}\wedge
T}\tilde{f}_s^i(U_s,V_s)ds|\mathcal{G}_{T_n}^{(t,\lambda)}\right].
\end{align*}
Moreover,
\begin{align*}
&\ \mathbf{E}\left[\int_{T_n}^T\lambda
e^{-\lambda(s-{T_n})}\max\left\{\tilde{\mathcal{M}}U_s^i,U_{s}^{i}\right\}ds
+e^{-\lambda(T-T_n)}U_{T}^i|\mathcal{G}_{T_n}^{(t,\lambda)}\right]
\\
=&\
\mathbf{E}\left[\max\left\{\tilde{\mathcal{M}}U_{T_{n+1}}^i,U_{T_{n+1}}^{i}\right\}\mathbf{1}_{\{T_{n+1}<
T\}}+U_{T}^i\mathbf{1}_{\{T_{n+1}\geq
T\}}|\mathcal{G}_{T_n}^{(t,\lambda)}\right].
\end{align*}
Hence, plugging the above two expressions into (\ref{DPEforBSDE2})
gives us
\begin{equation*}
U_{T_n}^{i}=\mathbf{E}\left[\int_{T_n}^{T_{n+1}\wedge
T}\tilde{f}_s^i(U_s,V_s)ds+\max\left\{\tilde{\mathcal{M}}U^i_{T_{n+1}},U_{T_{n+1}}^{i}\right\}\mathbf{1}_{\{T_{n+1}<
T\}}+U^i_{T}\mathbf{1}_{\{T_{n+1}\geq
T\}}|\mathcal{G}_{T_n}^{(t,\lambda)}\right].
\end{equation*}
We conclude (\ref{recursiveBSDE}) by taking $T\uparrow\infty$ in the
above equation.

The second observation is that if we define
$\widehat{U}^{i}=\max\left\{\tilde{\mathcal{M}}U^i,U^{i}\right\}$,
then $\widehat{U}^{i}$ satisfies the following recursive equation:
\begin{equation}\label{DPEFORBSDE3}
\widehat{U}_{T_n}^{i}=\max\left\{\tilde{\mathcal{M}}U^i_{T_{n}},
\mathbf{E}\left[\int_{T_n}^{T_{n+1}}\tilde{f}_s^i(U_s,V_s)ds+\widehat{U}^{i}_{T_{n+1}}|\mathcal{G}_{T_n}^{(t,\lambda)}\right]\right\},
\end{equation}
We show that
$\left(\int_0^{T_n}\tilde{f}^i_s(U_s,V_s)ds+\widehat{U}^i_{T_n}\right)_{n\geq
0}$ is the snell envelop of
$\left(\int_0^{T_n}\tilde{f}^i_s(U_s,V_s)ds+\tilde{\mathcal{M}}U^i_{T_n}\right)_{n\geq
0}$ in the following lemma.

\begin{lemma}\label{lemma2}
Suppose that Assumptions \ref{Assumption1} and \ref{Assumption2}
hold. Let $(U,V)$ be the unique solution to the infinite horizon
BSDE system (\ref{IBSDE1.0}). Then the value of the following
optimal stopping time problem
\begin{equation}\label{optimalstopping2}
\widehat{y}_{T_n}^{i,(t,\lambda)}=
\esssup_{N\in\mathcal{N}_{{n}}{(t,\lambda)}}
\mathbf{E}\left[\int_{T_n}^{T_N}\tilde{f}_s^i(U_s,V_s)ds+
\tilde{\mathcal{M}}U^i_{T_N}|\mathcal{G}_{T_n}^{(t,\lambda)}\right]
\end{equation}
is given by $\widehat{y}_{T_n}^{i,(t,\lambda)}=\hat{U}^i_{T_n}$
$a.s.$ for $n\geq 0$. The optimal stopping time is given by
$$N^{*}_{n}=
\inf\left\{k\geq n: U^{i}_{T_k}\leq
\tilde{\mathcal{M}}U^{i}_{T_k}\right\} = \inf\left\{k\geq n:
\widehat{U}^{i}_{T_k}\leq \tilde{\mathcal{M}}U^{i}_{T_k}\right\}.$$
\end{lemma}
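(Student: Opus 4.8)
The plan is to recognize the two discrete sequences $\bigl(\int_{0}^{T_k}\tilde f^i_s(U_s,V_s)\,ds+\widehat U^i_{T_k}\bigr)_{k\ge n}$ and $\bigl(\int_{0}^{T_k}\tilde f^i_s(U_s,V_s)\,ds+\tilde{\mathcal M}U^i_{T_k}\bigr)_{k\ge n}$ as a Snell-envelope pair indexed by the Poisson clock, and then to read off the value $\widehat U^i_{T_n}$ and the optimal index $N^*_n$ from the classical characterization of the Snell envelope in discrete time, the only real work being the infinite-horizon tail estimates. First I would note that (\ref{DPEFORBSDE3}) is immediate from (\ref{recursiveBSDE}), since $\widehat U^i_{T_n}=\max\{\tilde{\mathcal M}U^i_{T_n},U^i_{T_n}\}$ and $U^i_{T_n}=\mathbf E[\int_{T_n}^{T_{n+1}}\tilde f^i_s(U_s,V_s)\,ds+\widehat U^i_{T_{n+1}}\mid\mathcal G^{(t,\lambda)}_{T_n}]$. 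Then, fixing $n$ and $i$, set $S_k=\int_{T_n}^{T_k}\tilde f^i_s(U_s,V_s)\,ds+\widehat U^i_{T_k}$ and $X_k=\int_{T_n}^{T_k}\tilde f^i_s(U_s,V_s)\,ds+\tilde{\mathcal M}U^i_{T_k}$ for $k\ge n$, so that $S_n=\widehat U^i_{T_n}$ and $\widehat y^{i,(t,\lambda)}_{T_n}=\esssup_{N\in\mathcal N_n(t,\lambda)}\mathbf E[X_N\mid\mathcal G^{(t,\lambda)}_{T_n}]$; pulling the $\mathcal G^{(t,\lambda)}_{T_k}$-measurable term $\int_{T_n}^{T_k}\tilde f^i_s\,ds$ through, (\ref{DPEFORBSDE3}) reads verbatim as $S_k=\max\{X_k,\ \mathbf E[S_{k+1}\mid\mathcal G^{(t,\lambda)}_{T_k}]\}$, exhibiting $(S_k)_{k\ge n}$ as a $\tilde{\mathbb G}^{(t,\lambda)}$-supermartingale dominating $(X_k)_{k\ge n}$. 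Before using this I would record the integrability and tail behaviour: from $(U,V)\in\mathcal S^2_{a-r}(\mathbb R^d)\times\mathcal H^2_{a-r}$, the linear growth of $\tilde f$, condition (\ref{IntegrableCondition}), boundedness of the $g^{ij}$ and independence of the Poisson clock from $W$, one gets $\sup_{k\ge n}\mathbf E|S_k|<\infty$ and $\int_{T_n}^{T_k}\tilde f^i_s(U_s,V_s)\,ds\to\int_{T_n}^{\infty}\tilde f^i_s(U_s,V_s)\,ds$ in $L^1$, so that the $L^1$-bounded supermartingale $(S_k)_{k\ge n}$ converges a.s.\ and in $L^1$ to some $S_\infty$. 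I would fix the interpretation of stopping ``at $T_\infty=\infty$'': the obstacle term vanishes there, so the reward of $N=\infty$ is $X_\infty:=\int_{T_n}^{\infty}\tilde f^i_s(U_s,V_s)\,ds$, consistent with $U_{T_k}\to0$ in $L^2$ as used in the proof of Theorem \ref{Theorem}. Two facts about $S_\infty$ will then be used: since $\widehat U^i\ge U^i$ and $U_{T_k}\to0$ along a subsequence, $S_\infty\ge X_\infty$; and on $\{N^*_n=\infty\}$ one has $\widehat U^i_{T_k}=U^i_{T_k}$ for every $k\ge n$, hence there $S_\infty=X_\infty$.

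With this in hand the two inequalities are routine Snell-envelope bookkeeping. For the upper bound, given any $N\in\mathcal N_n(t,\lambda)$, optional sampling gives $S_n\ge\mathbf E[S_{N\wedge m}\mid\mathcal G^{(t,\lambda)}_{T_n}]$ for every $m\ge n$; letting $m\to\infty$ (the needed convergence following from the $L^1$-boundedness of $(S_k)$, e.g.\ via its Doob decomposition) yields $S_n\ge\mathbf E[S_N\mid\mathcal G^{(t,\lambda)}_{T_n}]\ge\mathbf E[X_N\mid\mathcal G^{(t,\lambda)}_{T_n}]$, and taking the essential supremum over $N$ gives $\widehat U^i_{T_n}=S_n\ge\widehat y^{i,(t,\lambda)}_{T_n}$. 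For the reverse inequality and optimality, take $N=N^*_n=\inf\{k\ge n:U^i_{T_k}\le\tilde{\mathcal M}U^i_{T_k}\}$, which equals $\inf\{k\ge n:\widehat U^i_{T_k}=\tilde{\mathcal M}U^i_{T_k}\}$ because $\widehat U^i=\max\{\tilde{\mathcal M}U^i,U^i\}$. On $\{N^*_n>k\}$ one has $\widehat U^i_{T_k}>\tilde{\mathcal M}U^i_{T_k}$, so the maximum in (\ref{DPEFORBSDE3}) is attained by its second argument, i.e.\ $S_k=\mathbf E[S_{k+1}\mid\mathcal G^{(t,\lambda)}_{T_k}]$ there; this says exactly that the stopped process $(S_{N^*_n\wedge k})_{k\ge n}$ is a $\tilde{\mathbb G}^{(t,\lambda)}$-martingale. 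Hence $S_n=\mathbf E[S_{N^*_n\wedge m}\mid\mathcal G^{(t,\lambda)}_{T_n}]$ for every $m$; letting $m\to\infty$, on $\{N^*_n<\infty\}$ we have $S_{N^*_n\wedge m}=S_{N^*_n}=X_{N^*_n}$ for $m$ large (using $\widehat U^i_{T_{N^*_n}}=\tilde{\mathcal M}U^i_{T_{N^*_n}}$), while on $\{N^*_n=\infty\}$ we have $S_{N^*_n\wedge m}=S_m\to S_\infty=X_\infty$, so $S_n=\mathbf E[X_{N^*_n}\mid\mathcal G^{(t,\lambda)}_{T_n}]\le\widehat y^{i,(t,\lambda)}_{T_n}$. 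Combining the two bounds gives $\widehat y^{i,(t,\lambda)}_{T_n}=\widehat U^i_{T_n}$ with $N^*_n$ optimal, and $n=0$, $T_0=t$ gives $\widehat y^{i,(t,\lambda)}_t=\widehat U^i_t$.

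The main obstacle is not this bookkeeping but the infinite-horizon tail control that legitimizes the two limit passages $m\to\infty$, and this is where Assumption \ref{Assumption1} does its work: the structure condition (\ref{Structure}) (with $\delta>1$) together with (\ref{IntegrableCondition}) are precisely what place $(U,V)$ in the weighted spaces $\mathcal S^2_{a-r}\times\mathcal H^2_{a-r}$ and deliver both the $L^1$-convergence of the running-cost integral over $[T_n,\infty)$ and the decay $U_{T_k}\to0$; boundedness of $g$ from Assumption \ref{Assumption2} and the a.s.\ finiteness of every $T_k$ are then used to control $\tilde{\mathcal M}U^i_{T_k}$ and $\widehat U^i_{T_k}$ and to make the convention for $X_\infty$ on $\{N^*_n=\infty\}$ match the value of never stopping. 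I would also take a little care that the ambient filtration is the discrete one $\tilde{\mathbb G}^{(t,\lambda)}=(\mathcal G^{(t,\lambda)}_{T_k})_{k\ge0}$ with $\mathcal G^{(t,\lambda)}_{T_k}$ the pre-$T_k$ $\sigma$-field of Section 2, so that the conditional expectations in (\ref{recursiveBSDE})--(\ref{DPEFORBSDE3}) and the discrete optional sampling theorem apply to $(S_k)$ and $(X_k)$ exactly as stated; with the definitions of Section 2 this is automatic.
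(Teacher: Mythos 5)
Your proposal is correct and follows essentially the same route as the paper: identify $\bigl(\int_0^{T_k}\tilde f^i_s(U_s,V_s)\,ds+\widehat U^i_{T_k}\bigr)_{k\ge n}$ as the Snell envelope via the recursion (\ref{DPEFORBSDE3}), use the supermartingale property plus optional sampling for the upper bound, and the martingale property of the process stopped at $N^*_n$ for the lower bound. The only cosmetic differences are that the paper reduces to $\tilde f^i\equiv 0$ by a shift rather than carrying the running integral along, and that you spell out the infinite-horizon tail control (the convention at $N=\infty$, $L^1$-convergence of the stopped sequence) which the paper leaves implicit.
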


\begin{proof} Without loss of generality, we may assume
$\tilde{f}^i_s(U_s,V_s)=0$. Otherwise we only need to consider
$\int_0^{T_n}\tilde{f}^i_s(U_s,V_s)ds+\widehat{y}_{T_n}^{i,(t,\lambda)}$
and
$\int_0^{T_n}\tilde{f}_s^i(U_s,V_s)ds+\tilde{\mathcal{M}}U^i_{T_n}$
instead of $\widehat{y}_{T_n}^{i,(t,\lambda)}$ and
$\tilde{\mathcal{M}}U^i_{T_n}$.

From (\ref{DPEFORBSDE3}), $(\widehat{U}^i_{T_m})_{m\geq n}$ is
obviously a $\tilde{\mathbb{G}}^{(t,\lambda)}$-supermartingale.
Hence, for any $N\in\mathcal{N}_n(t,\lambda)$,
$$\widehat{U}^i_{T_n}\geq \mathbf{E}\left[\widehat{U}^i_{T_N}|\mathcal{G}^{(t,\lambda)}_{T_n}
\right]\geq
\mathbf{E}\left[\tilde{\mathcal{M}}U^i_{T_{N}}|\mathcal{G}^{(t,\lambda)}_{T_n}
\right],
$$
where we used (\ref{DPEFORBSDE3}) in the second inequality. Taking
the supremum over $N\in\mathcal{N}_n(t,\lambda)$, we get
$\widehat{U}^i_{T_n}\geq \widehat{y}_{T_n}^{i,(t,\lambda)}$.

To prove the reverse inequality, we first show that
$\left(\widehat{U}_{T_{m\wedge N^*_n}}^{i}\right)_{m\geq n}$ is a
$\tilde{\mathbb{G}}^{(t,\lambda)}$-martingale. Indeed,

\begin{align*}
\mathbf{E}\left[\widehat{U}^{i}_{T_{m+1\wedge
N^*_n}}|\mathcal{G}_{T_m}^{(t,\lambda)}\right]&
=\mathbf{E}\left[\left(\sum_{j=n}^m\mathbf{1}_{\{{N}_n^*=j\}}+\mathbf{1}_{\{{N}_n^*\geq
m+1\}}\right)\widehat{U}^{i}_{T_{m+1\wedge{N}^*_n}}|\mathcal{G}_{T_m}^{(t,\lambda)}\right]\\
&=\mathbf{E}\left[\sum_{j=n}^m\mathbf{1}_{\{{N}_n^*=j\}}\widehat{U}_{T_{j}}^{i}
+\mathbf{1}_{\{{N}_n^*>m\}}\widehat{U}^{i}_{T_{m+1}}|\mathcal{G}_{T_m}^{(t,\lambda)}\right]\\
&=\sum_{j=n}^m\mathbf{1}_{\{{N}_n^*=j\}}\widehat{U}_{T_{j}}^{i}+
\mathbf{1}_{\{{N}_n^*>m\}}\mathbf{E}\left[\widehat{U}^{i}_{T_{m+1}}|\mathcal{G}_{T_{m}}^{(t,\lambda)}\right]\\
&=\sum_{j=n}^m\mathbf{1}_{\{{N}_n^*=j\}}\widehat{U}_{T_{j}}^{i}+
\mathbf{1}_{\{{N}_n^*>m\}}\widehat{U}^{i}_{T_m}=\widehat{U}^{i}_{T_{m\wedge
{N}_n^*}},
\end{align*}
where we used (\ref{DPEFORBSDE3}) and the definition of ${N}^*_n$ in
the second last equality. Hence,
\begin{align*}
\widehat{U}^i_{T_n}=\mathbf{E}[\widehat{U}^i_{
T_{N^*_n}}|\mathcal{G}^{(t,\lambda)}_{T_n}]=
\mathbf{E}[\tilde{\mathcal{M}}U^i_{ T_{
N^*_n}}|\mathcal{G}^{(t,\lambda)}_{T_n}]\leq
\widehat{y}_{T_n}^{i,(t,\lambda)},
\end{align*}
and ${N}^*_n$ is the optimal stopping time for the optimal stopping
problem (\ref{optimalstopping2}).
\end{proof}

We are now in a position to prove Lemma \ref{lemma1}. From
(\ref{recursiveBSDE}) and the definition of $\hat{U}$,
\begin{equation}\label{recursiveBSDE1}
U^i_{T_n}=\mathbf{E}\left[\int_{T_n}^{T_{n+1}}\tilde{f}_s^i(U_s,V_s)ds+\widehat{U}^i_{T_{n+1}}|\mathcal{G}_{T_n}^{(t,\lambda)}\right].
\end{equation}
Thanks to Lemma \ref{lemma2},
$\widehat{U}^i_{T_{n+1}}=\widehat{y}^{i,(t,\lambda)}_{T_{n+1}}$,
which is the value of the optimal stopping problem
(\ref{optimalstopping2}). Hence, for any
$N\in\mathcal{N}_{n+1}(t,\lambda)$,
\begin{equation}\label{optimalstopping2.1}
\widehat{U}^i_{T_{n+1}}=\widehat{y}_{T_{n+1}}^{i,(t,\lambda)}\geq
\mathbf{E}\left[\int_{T_{n+1}}^{T_N}\tilde{f}_s^i(U_s,V_s)ds+
\tilde{\mathcal{M}}U^i_{T_N}|\mathcal{G}_{T_{n+1}}^{(t,\lambda)}\right].
\end{equation}

By plugging (\ref{optimalstopping2.1}) into (\ref{recursiveBSDE1}),
we obtain
$$U^i_{T_n}\geq \mathbf{E}\left[\int_{T_n}^{T_{n+1}}\tilde{f}_s^i(U_s,V_s)ds+
\int_{T_{n+1}}^{T_N}\tilde{f}_s^i(U_s,V_s)ds+\tilde{\mathcal{M}}U^i_{T_N}|\mathcal{G}_{T_n}^{(t,\lambda)}\right].$$
Taking the supremum over $N\in\mathcal{N}_{n+1}(t,\lambda)$ gives us
$U^i_{T_n}\geq\tilde{y}^{i,(t,\lambda)}_{T_n}$.

To prove the reverse inequality, we take $N^*_{n+1}=\inf\{k\geq n+1:
U_{T_{k}}^i\leq\tilde{\mathcal{M}}U_{T_k}^i\}$, which is the optimal
stopping time for $\widehat{y}_{T_{n+1}}^{i,(t,\lambda)}$, and
therefore,
\begin{equation}\label{optimalstopping2.2}
\widehat{U}^i_{T_{n+1}}=\widehat{y}_{T_{n+1}}^{i,(t,\lambda)}=
\mathbf{E}\left[\int_{T_{n+1}}^{T_{N^*_{n+1}}}\tilde{f}_s^i(U_s,V_s)ds+
\tilde{\mathcal{M}}U^i_{T_{N^*_{n+1}}}|\mathcal{G}_{T_{n+1}}^{(t,\lambda)}\right].
\end{equation}

Plugging (\ref{optimalstopping2.2}) into (\ref{recursiveBSDE1})
gives us
$$U^i_{T_n}=\mathbf{E}\left[\int_{T_n}^{T_{n+1}}\tilde{f}_s^i(U_s,V_s)ds+
\int_{T_{n+1}}^{T_{N^*_{n+1}}}\tilde{f}_s^i(U_s,V_s)ds+\tilde{\mathcal{M}}U^i_{T_{N^*_{n+1}}}|\mathcal{G}_{T_n}^{(t,\lambda)}\right]\leq
\tilde{y}^{i,(t,\lambda)}_{T_n}.$$ Hence the value of the optimal
stopping time problem (\ref{optimalstopping}) is given by
$\tilde{y}^{i,(t,\lambda)}_{T_n}=U^i_{T_n}$, and the optimal
stopping time is $N^*_{n+1}$, or equivalently, $\tau^*_{T_{n+1}}$.
\section{The Structure of Switching Regions in a Markovian Case}

In this section, we investigate the switching regions of the optimal
switching problem (\ref{optimalstopping}) in a Markovian setting.
Specifically, we assume that there are two switching regimes $d=2$,
and the Brownian motion is one dimension $n=1$. Moreover,
Assumptions \ref{Assumption1} and \ref{Assumption2} are replaced by
\begin{assumption}\label{Assumption3}
The driver $f_s(y,z)$ has the form: $f_s(y,z)=h(X_s)-a_1 y,$ where
$X$ is a geometric Brownian motion starting from
$X_0=x\in\mathbb{R}^+$ with constant drift $b$ and constant
volatility $\sigma>0$:
$$dX_s=bX_sds+\sigma X_sdW_s,$$
$h=(h^1,h^2)^*$ is nonnegative and Lipschitz continuous, and
$a_1>\max\{b,0\}$ is large enough so that for $a=-a_1+2.5\lambda$,
$$\mathbf{E}\left[\int_0^{\infty}|h(X_s)|^2e^{2as}ds\right]<\infty.$$
\end{assumption}
\begin{assumption}\label{Assumption4}
The switching cost $g^{ij}$ for $i,j\in\{1,2\}$ is a constant, and
satisfies (1) $g^{ii}=0$; and (2) $g^{ij}+g^{ji}>0$ for $i\neq j$.
\end{assumption}

Under Assumptions \ref{Assumption3} and \ref{Assumption4}, the
solution to the infinite horizon BSDE system (\ref{IBSDE1}) is
Markovian, i.e. there exist measurable functions $v=(v^1,v^2)$ such
that $Y_t=v(X_t)$. By Proposition \ref{PropositionBSDE}, $v(X_t)$ is
the solution to the following equation:
\begin{eqnarray}\label{IBSDE3}
\left\{
 \begin{array}{ll}
v^i(X_t)=v^{i}(X_T)+\int_t^{T}h^i(X_s)-a_1
v^i(X_s)+\lambda\max\left\{0,
v^{j}(X_s)-g^{ij}-v^i(X_s)\right\}ds-\int_t^{T}Z_s^idW_s,\\[+0.2cm]
\lim_{T\uparrow\infty}\mathbf{E}\left[e^{2a T}|v(X_T)|^2\right]=0
\end{array}
\right.
\end{eqnarray}
for $i=1,2$ and $j=3-i$. Without loss of generality, we may also
assume that $h(0)=0$. Indeed, if not, we consider
$\tilde{v}^i(X_t)=v^i(X_t)-\frac{h^i(0)}{a_1}$,
$\tilde{h}^i(X_t)=h^i(X_t)-h^i(0)$, and
$\tilde{g}^{ij}=g^{ij}+\frac{h^i(0)-h^j(0)}{a_1}$. Then,
\begin{eqnarray*}
\left\{
 \begin{array}{ll}
\tilde{v}^i(X_t)=\tilde{v}^{i}(X_T)+\int_t^{T}\tilde{h}^i(X_s)-a_1
\tilde{v}^i(X_s)+\lambda\max\left\{0,
\tilde{v}^{j}(X_s)-\bar{g}^{ij}-\tilde{v}^i(X_s)\right\}ds-\int_t^{T}Z_s^idW_s,\\[+0.2cm]
\lim_{T\uparrow\infty}\mathbf{E}\left[e^{2a
T}|\tilde{v}(X_T)|^2\right]=0.
\end{array}
\right.
\end{eqnarray*}

From Theorem \ref{Theorem}, by choosing $r=-a_1$, we know that
$v^i(X_0)=v^i(x)$ is the value of the following optimal switching
problem:
\begin{equation}\label{optimalswitching3}
v^i(x)=\esssup_{u\in\mathcal{K}_i(0,\lambda)}
\mathbf{E}\left[\int_0^{\infty}e^{-a_1s}h^{u_s}(X_s)ds-\sum_{k\geq
1}e^{-a_1T_k}g^{\alpha_{k-1},\alpha_k}\right].
\end{equation}
Moreover, the optimal switching strategy is given as
(\ref{optimalswitching1.1}): $\tau_0^{*}=0$ and $\alpha_0^*=i$,
\begin{equation*}
\tau^*_{k+1}=\inf\left\{T_N> \tau^*_{k}: v^{\alpha_k^*}(X_{T_N})\leq
v^{\alpha_{k+1}^*}(X_{T_N})-g^{\alpha_k^*,\alpha^*_{k+1}}\right\}
\end{equation*}
where $\alpha_{k+1}^*=3-\alpha_{k}^*$.

Therefore, the player will switch from regime $i$ to regime $j$, if
$X$ is in the following switching region $\mathcal{S}^i$ at Poisson
arrival times:
\begin{equation*}
\mathcal{S}^i = \{ x\in(0,\infty): v^i(x) \leq v^j(x)-g^{ij}\}
\end{equation*}
for $i=1,2$ and $j=3-i$. On the other hand, the player will stay in
regime $i$, if $X$ is in the following continuation region
$\mathcal{C}^i$:
\begin{equation*}
\mathcal{C}^i = \{ x\in(0,\infty): v^i(x) > v^j(x)-g^{ij}\}.
\end{equation*}
We further set
\begin{align*}
\underline{x}^i&=\inf\mathcal{S}^i\in [0,\infty];\\
\overline{x}^i&=\sup\mathcal{S}^i\in [0,\infty]
\end{align*}
with the usual convention that $\inf\phi=\infty$ and $\sup\phi=0$.

To distinguish the regimes $1$ and $2$, we impose the following
assumption, which includes several interesting cases for
applications.

\begin{assumption}{\label{s3}}
The difference of the running profits is non-negative: $F(x)=
h^2(x)-h^1(x)\geq 0$, and is strictly increasing on $(0,\infty)$,
and moreover, the switching cost from regime $1$ to regime $2$ is
positive: $g_{12}>0$.
\end{assumption}

The above assumption has clear financial meanings. The
non-negativity means that the regime $2$ is more favorable than the
regime $1$. The monotonicity implies that the improvement is better
and better. Thus, it is natural to assume that the corresponding
switching cost $g^{12}$ from regime $1$ to regime $2$ is positive.

The main result of this section is the following characterization of
the switching regions of the optimal switching problem
(\ref{optimalswitching3}), which are also demonstrated in Figure 1.

\begin{theorem}\label{Theorem2}
Suppose that Assumptions \ref{Assumption3}, \ref{Assumption4} and
\ref{s3} hold. Then we have the following five cases for the
switching regions $\mathcal{S}^1$ and $\mathcal{S}^2$:
\begin{enumerate}
\item If $a_1g^{12}\geq F(\infty)$ and $a_1g^{21}\geq 0$, then,
$\mathcal{S}^1=\phi, \mathcal{S}^2=\phi$;
\item If $a_1g^{12}\geq F(\infty)$ and $-F(\infty)<a_1g^{21}< 0$, then, $\mathcal{S}^1=\phi, \mathcal{S}^2=(0,\overline{x}^2]$, where $\overline{x}^2\in(0,\infty)$;
\item If $a_1g^{12}\geq F(\infty)$ and $a_1g^{21}\leq -F(\infty)$, then, $\mathcal{S}^1=\phi, \mathcal{S}^2=(0,\infty)$;
\item If $a_1g^{12}< F(\infty)$ and $a_1g^{21}\geq 0$, then, $\mathcal{S}^1=[\underline{x}^1,\infty),
\mathcal{S}^2=\phi$, where $\underline{x}^1\in(0,\infty)$.
\item If $a_1g^{12}< F(\infty)$ and $ -F(\infty)<a_1g^{21}<0$, then, $\mathcal{S}^1=[\underline{x}^1,\infty), \mathcal{S}^2=(0,\overline{x}^2]$, where
$\underline{x}^1,\overline{x}^2\in(0,\infty)$.
\end{enumerate}
\end{theorem}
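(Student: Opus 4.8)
The plan is to analyze the difference function $G(x) = v^2(x) - v^1(x)$ and determine its sign and monotonicity properties, since the switching regions are characterized entirely by $G$: indeed $\mathcal{S}^1 = \{x : G(x) \geq g^{12}\}$ and $\mathcal{S}^2 = \{x : -G(x) \geq g^{21}\}$, i.e. $\mathcal{S}^2 = \{x : G(x) \leq -g^{21}\}$. First I would set up a comparison-type argument for $G$. Subtracting the two equations in (\ref{IBSDE3}) for $i=1$ and $i=2$, the process $G(X_t)$ solves an infinite horizon BSDE with driver involving $F(X_s) - a_1 G(X_s)$ together with the difference of the two penalty terms $\lambda\max\{0, -G_s - g^{21}\} - \lambda\max\{0, G_s - g^{12}\}$; this difference is a bounded, Lipschitz, nonincreasing function of $G$, so the effective driver for $G$ is still monotone in $G$ with the same constant $a_1$. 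Then, using the probabilistic (Feynman–Kac type) representation for this scalar equation and the fact that $F \geq 0$, a comparison argument against the equation with zero penalty and zero source gives bounds $-\tfrac{g^{21}\lambda}{a_1+\lambda}\!\cdot(\cdots) \le \cdots$; more usefully, comparing with constant sub/supersolutions yields the two-sided a priori bound that pins down where the penalties vanish. The key quantitative inputs will be: (i) if $G(x) < g^{12}$ and $-G(x) < g^{21}$ on an interval, then on that interval $G(X_t)$ solves the \emph{linear} equation $dG = (a_1 G - F)\,dt + (\cdots)dW$, whose solution is the explicit Feynman–Kac integral $G(x) = \mathbf{E}\!\left[\int_0^\infty e^{-a_1 s} F(X_s^x)\,ds\right]$ up to boundary corrections; and (ii) the limiting values as $x \downarrow 0$ and $x \uparrow \infty$: since $h(0)=0$ (hence $F(0)=0$) one gets $G(0+) = 0$, and as $x \to \infty$ one gets $G(\infty) = \tfrac{F(\infty)}{a_1}$ (interpreting $F(\infty)$ as the possibly-infinite increasing limit), because $a_1 > \max\{b,0\}$ forces $e^{-a_1 s}X_s^x \to 0$ appropriately and monotone convergence applies.

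Next I would establish monotonicity: $G$ is nondecreasing on $(0,\infty)$. This follows from Assumption \ref{s3} (that $F$ is strictly increasing) combined with the comparison principle applied to $X^{x}$ versus $X^{x'}$ for $x < x'$ — geometric Brownian motion is monotone in its initial condition, so $F(X_s^x) \le F(X_s^{x'})$ pathwise, and the monotone dependence of the solution map of the scalar penalized BSDE on its source term upgrades this to $G(x) \le G(x')$. Strict monotonicity on the region where the penalties are inactive comes from strict monotonicity of $F$. With $G$ nondecreasing, $G(0+)=0$, and $G(\infty) = F(\infty)/a_1$, the set $\mathcal{S}^1 = \{G \ge g^{12}\}$ is an up-set $[\underline{x}^1,\infty)$ and $\mathcal{S}^2 = \{G \le -g^{21}\}$ is a down-set $(0,\overline{x}^2]$, and whether each is empty, a proper half-line, or all of $(0,\infty)$ is decided by comparing the thresholds $g^{12}$ and $-g^{21}$ against the range $[0, F(\infty)/a_1]$ of $G$ — which is exactly the case division $a_1 g^{12} \gtrless F(\infty)$ and $a_1 g^{21} \gtrless 0$, $a_1 g^{21} \gtrless -F(\infty)$ in the statement. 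One also must check the two regions cannot overlap: $\mathcal{S}^1 \cap \mathcal{S}^2 = \emptyset$ because $g^{12} + g^{21} > 0$ (Assumption \ref{Assumption4}) rules out $G(x) \ge g^{12}$ and $G(x) \le -g^{21}$ simultaneously; this is also why the sixth a priori case ($a_1 g^{12} < F(\infty)$ and $a_1 g^{21} \le -F(\infty)$) does not produce a consistent configuration and is excluded — if it arose one would need $\underline{x}^1 = 0$, contradicting $G(0+)=0 < g^{12}$, forcing $a_1 g^{12} \le 0$, impossible since $g^{12}>0$.

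Finally I would verify the finiteness/positivity of the critical points where claimed: $\underline{x}^1 \in (0,\infty)$ exactly when $0 < g^{12} < F(\infty)/a_1$ (continuity of $G$ and $G(0+)=0$ give $\underline{x}^1 > 0$; $G(\infty) > g^{12}$ gives $\underline{x}^1 < \infty$), and symmetrically $\overline{x}^2 \in (0,\infty)$ when $0 < g^{21} < F(\infty)/a_1$. The main obstacle I anticipate is \emph{rigorously justifying the comparison principle and the Feynman–Kac representation for the scalar penalized infinite-horizon BSDE satisfied by $G$}, since $G$ is a difference of two components of a coupled system rather than itself the solution of a standalone well-posed BSDE in the form covered by Proposition \ref{PropositionBSDE}; one must check that the difference of the two $\max$-penalty terms is genuinely a (monotone, Lipschitz) function of $G$ alone and not of $(v^1,v^2)$ separately — it is, because $\lambda\max\{0,v^2 - g^{12} - v^1\} - \lambda\max\{0, v^1 - g^{21} - v^2\} = \lambda\,\psi(G)$ for the explicit function $\psi(g) = \max\{0, g - g^{12}\} - \max\{0, -g - g^{21}\}$, which is nondecreasing and $1$-Lipschitz — and then that the resulting scalar equation falls within the Darling–Pardoux/Briand–Hu framework so that comparison and the representation hold. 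Once that reduction is clean, the remaining steps are the monotonicity transfer from $F$ to $G$ via GBM's monotone flow and the elementary case bookkeeping.
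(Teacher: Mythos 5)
Your reduction is the same as the paper's: the paper works with $G_1=v^1-v^2+g^{12}$ and $G_2=v^2-v^1+g^{21}$ (affine shifts of your $G$), derives the scalar penalized ODE $-\mathcal{L}G_1+a_1G_1=\lambda(-G_1)^+-\lambda(G_1-g^{12}-g^{21})^++a_1g^{12}-F(x)$, reads off $\mathcal{S}^1=\{G_1\le 0\}$ and $\mathcal{S}^2=\{G_2\le 0\}$, and then runs exactly your program: monotonicity of the difference plus boundary behavior at $0$ and $\infty$ plus a comparison principle. Where you genuinely differ is the monotonicity step. The paper differentiates the ODE, observes that $G_1'$ is a subsolution of a linear ODE with nonpositive source $-F'(x)$ (using $a_1>b$), and applies Proposition \ref{propositioncompare} to conclude $G_1'\le 0$; you instead propose the pathwise monotone flow of geometric Brownian motion combined with BSDE comparison in the source term of the scalar penalized equation. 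Your route is arguably cleaner, since it avoids treating the derivative of a function only known to be Lipschitz as a (viscosity) subsolution; its cost is exactly the verification you flag, namely that $G(X^x)$ solves a well-posed standalone scalar equation with the monotone Lipschitz driver $y\mapsto -a_1y+\lambda\psi(y)$ so that uniqueness and comparison apply.

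Two of your quantitative claims are wrong as stated, though harmlessly so for the final case division. First, $G(0+)=0$ only when $g^{21}\ge 0$; when $g^{21}<0$ the lower penalty is active at $x=0$ and the equation forces $G(0+)=-\lambda g^{21}/(a_1+\lambda)>0$ (equivalently the paper's $G_2(0)=a_1g^{21}/(a_1+\lambda)$). Second, $G(\infty)=F(\infty)/a_1$ only when both penalties are inactive near infinity; for instance in case 4 the upper penalty is active there and the correct limit is $(\lambda g^{12}+F(\infty))/(a_1+\lambda)$. In every case the relevant comparisons of $G(0+)$ and $G(\infty)$ against $g^{12}$ and $-g^{21}$ still yield precisely the thresholds $a_1g^{12}\gtrless F(\infty)$ and $a_1g^{21}\gtrless 0$, $a_1g^{21}\gtrless -F(\infty)$, so your conclusions survive, but the limits must be computed branch by branch as the paper does via Feynman--Kac and Fatou. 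Two minor further points: the penalty difference $\psi(G)$ is Lipschitz and monotone but not bounded; and the cleanest reason the sixth a priori case is vacuous is simply that $a_1g^{12}<F(\infty)$ together with $a_1g^{21}\le -F(\infty)$ would force $a_1(g^{12}+g^{21})<0$, contradicting Assumption \ref{Assumption4} --- the $\underline{x}^1$ argument you sketch for this is not needed and does not quite parse.
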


\subsection{The Structure of Switching Regions: Proof of Theorem \ref{Theorem2}}

The proof relies on several basic properties of the value function
$v(\cdot)$, and the associated comparison principle. First, we prove
that the value function $v(\cdot)$ has at most linear growth and is
Lipschitz continuous by employing the optimal switching
representation (\ref{optimalswitching3}). The proof is adapted from
Ly Vath and Pham \cite{LyVath}, and is provided in the Appendix.

\begin{proposition}\label{Propositionlineargrowth}
Suppose that Assumptions \ref{Assumption3} and \ref{Assumption4}
hold. Then the value function $v(\cdot)$ of the optimal switching
problem has at most linear growth,
$$|v^i(x)|\leq C(1+x),$$
and is Lipschitz continuous,
$$|v^i(x)-v^i(\bar{x})|\leq C|x-\bar{x}|,$$
for $i=1,2$ and $x,\bar{x}\in\mathbb{R}^+$.
\end{proposition}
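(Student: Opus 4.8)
The plan is to work entirely with the optimal switching representation (\ref{optimalswitching3}) for $v^i(x)$, in the spirit of Ly Vath and Pham \cite{LyVath}. The linear growth bound will follow from crude integral estimates for the underlying geometric Brownian motion, whereas Lipschitz continuity will be obtained by realizing the state processes started from $x$ and from $\bar x$ on the same probability space and feeding the \emph{same} switching strategy into both problems, so that all switching-cost terms cancel and only the (Lipschitz) difference of the running profits survives.

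For the linear growth estimate I would argue as follows. Since $h\geq 0$, the never-switch strategy (which is admissible and incurs no switching cost because $g^{ii}=0$) gives $v^i(x)\geq\mathbf{E}[\int_0^\infty e^{-a_1 s}h^i(X_s)\,ds]\geq 0$. For the upper bound, Lipschitz continuity of $h$ together with $h(0)=0$ yields $|h^j(x)|\leq C(1+x)$, and since $X$ is a geometric Brownian motion with drift $b<a_1$ started at $x$, one has $\mathbf{E}[X_s]=xe^{bs}$ and hence
\[
\mathbf{E}\Big[\int_0^\infty e^{-a_1 s}\max_{j=1,2}h^j(X_s)\,ds\Big]\leq C\int_0^\infty e^{-a_1 s}\big(1+xe^{bs}\big)\,ds=C\Big(\frac{1}{a_1}+\frac{x}{a_1-b}\Big)\leq C(1+x).
\]
The switching-cost term is controlled uniformly over $u\in\mathcal{K}_i(0,\lambda)$: since $g^{ij}$ is bounded, $\big|\sum_{k\geq1}e^{-a_1 T_k}g^{\alpha_{k-1},\alpha_k}\big|\leq C\sum_{k\geq1}e^{-a_1 T_k}$, and because $T_k$ is the sum of $k$ independent exponentials of parameter $\lambda$, $\mathbf{E}[\sum_{k\geq1}e^{-a_1 T_k}]=\sum_{k\geq1}(\lambda/(\lambda+a_1))^k=\lambda/a_1<\infty$ (here $a_1>0$). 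Taking the supremum over $u$ and combining the three bounds gives $0\leq v^i(x)\leq C(1+x)$.

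For Lipschitz continuity I would fix $x,\bar x\in\mathbb{R}^+$ and write both geometric Brownian motions on the same space driven by the same $W$, $X^x_s=x\mathcal{E}_s$ and $X^{\bar x}_s=\bar x\mathcal{E}_s$ with the common stochastic exponential $\mathcal{E}_s=\exp\big((b-\frac12\sigma^2)s+\sigma W_s\big)$, so that $|X^x_s-X^{\bar x}_s|=|x-\bar x|\,\mathcal{E}_s$. The crucial observation is that $\mathcal{K}_i(0,\lambda)$ does not depend on the initial state and that, for a fixed control $u$, the switching-cost functional $\sum_{k\geq1}e^{-a_1 T_k}g^{\alpha_{k-1},\alpha_k}$ depends on $u$ alone and not on $X$. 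Hence, given $\varepsilon>0$, picking $u^\varepsilon\in\mathcal{K}_i(0,\lambda)$ that is $\varepsilon$-optimal for $v^i(x)$ and using it as an admissible strategy for $v^i(\bar x)$ in (\ref{optimalswitching3}),
\begin{align*}
v^i(x)-v^i(\bar x)&\leq\mathbf{E}\Big[\int_0^\infty e^{-a_1 s}\big(h^{u^\varepsilon_s}(X^x_s)-h^{u^\varepsilon_s}(X^{\bar x}_s)\big)\,ds\Big]+\varepsilon\\
&\leq C|x-\bar x|\,\mathbf{E}\Big[\int_0^\infty e^{-a_1 s}\mathcal{E}_s\,ds\Big]+\varepsilon=\frac{C}{a_1-b}|x-\bar x|+\varepsilon,
\end{align*}
using $\mathbf{E}[\mathcal{E}_s]=e^{bs}$ and $a_1>b$. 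Letting $\varepsilon\downarrow0$ and then interchanging $x$ and $\bar x$ yields $|v^i(x)-v^i(\bar x)|\leq C|x-\bar x|$.

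The only points needing care are bookkeeping ones: that $\mathcal{K}_i(0,\lambda)$ is state-independent and the switching-cost functional is state-free — together these produce the cancellation that isolates the $h$-difference — and that the integrability guaranteed by $a_1>\max\{b,0\}$ renders every expectation finite and ensures the $\varepsilon$-optimal control is a genuine element of $\mathcal{K}_i(0,\lambda)$. I do not anticipate a substantive obstacle.
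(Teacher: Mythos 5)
Your proposal is correct, and the overall architecture (work directly with the representation (\ref{optimalswitching3}); same driving Brownian motion and same control for the two initial points so the switching costs cancel in the Lipschitz estimate) matches the paper's. The one genuinely different ingredient is how you control the switching-cost term in the upper bound. The paper follows Ly Vath and Pham and proves by induction, using $g^{ii}=0$ and $g^{ij}+g^{ji}>0$, the \emph{pathwise} bound $-\sum_{k=1}^{M}e^{-a_1T_k}g^{\alpha_{k-1},\alpha_k}\leq\max_{j=1,2}[-g^{ij}]$, valid for every realization and every $M$. You instead bound the cost \emph{in expectation} by $C\,\mathbf{E}[\sum_{k\geq1}e^{-a_1T_k}]=C\lambda/a_1$, exploiting the fact that switches can only occur at the exogenous Poisson times. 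Your argument is more elementary and needs only boundedness of $g$ and $a_1>0$, but it is specific to the Poisson-intervention setting: in the classical problem the switching times are arbitrary stopping times and can accumulate, so the expected discounted number of switches is not a priori finite and one is forced into the triangle-inequality induction. The paper's pathwise bound is also what one would need if the estimate had to hold conditionally rather than in expectation; here, where $v^i$ is defined as an expectation, either works. Your lower bound via the never-switch strategy (giving $v^i\geq 0$ since $h\geq 0$ and $g^{ii}=0$) is simpler than the paper's one-switch strategy and is perfectly adequate. The $\varepsilon$-optimal-control phrasing of the Lipschitz step is just the inequality $|\sup_u A(u)-\sup_u B(u)|\leq\sup_u|A(u)-B(u)|$ spelled out, and the $\varepsilon$-optimal control exists because the supremum is finite by the growth bound; no gap there.
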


Given the above linear growth property and the Lipschitz continuous
property of $v(\cdot)$, the nonlinear Feynman-Kac formula (see
Section 6 of \cite{Darling}) then implies that $v(\cdot)$ is the
unique (viscosity) solution to the following system of ODEs:
\begin{equation}\label{ode}
-\mathcal{L}v^i(x)+a_1v^i(x)=\lambda\max\left\{0,
v^j(x)-g^{ij}-v^i(x)\right\}+h^i(x)
\end{equation}
for $i=1,2$ and $j=3-j$, where the operator
$\mathcal{L}=\frac12\sigma^2x^2\frac{d^2}{d x^2}+bx\frac{d}{dx}$. Note that (\ref{ode}) are actually
penalized equations for the system of variational inequalities:
$$\min\{-\mathcal{L}v^i(x)+a_1v^i(x)-h^i(x), v^i(x)-v^j(x)+g^{ij}\}=0.$$
Moreover, the following comparison principle for (\ref{ode}) also holds, whose proof
is also provided in the Appendix for completeness.

\begin{proposition}\label{propositioncompare}
Suppose that $\underline{v}^i(\cdot)$ is the subsolution of the
following ODE:
\begin{equation}\label{ode1}
-\mathcal{L}\underline{v}^i(x)+a_1\underline{v}^i(x)\leq 0
\end{equation} for
$x\in\mathbb{R}^+$, and that $\underline{v}^i(\cdot)$ has at most
linear growth. Then $\underline{v}^i(x)\leq 0$ for
$x\in\mathbb{R}^+$. The supersolution property also holds for
$\overline{v}^i(\cdot)$ in a similar way.
\end{proposition}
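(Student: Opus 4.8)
We must show that if $\underline{v}^i$ has at most linear growth and satisfies $-\mathcal{L}\underline{v}^i + a_1\underline{v}^i \le 0$ on $(0,\infty)$ (in the viscosity sense), then $\underline{v}^i \le 0$.

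The plan is to build an explicit supersolution of the homogeneous equation that dominates any linearly growing subsolution near the boundary points $0$ and $\infty$, and then invoke a standard comparison argument to conclude. Concretely, I would look for functions of the form $\phi_\gamma(x) = x^\gamma$ and compute $-\mathcal{L}\phi_\gamma + a_1\phi_\gamma = x^\gamma\big(a_1 - \tfrac12\sigma^2\gamma(\gamma-1) - b\gamma\big)$. The quadratic $q(\gamma) = \tfrac12\sigma^2\gamma(\gamma-1) + b\gamma - a_1$ has $q(0) = -a_1 < 0$ and $q(1) = b - a_1 < 0$ under Assumption~\ref{Assumption3} (which forces $a_1 > \max\{b,0\}$), while $q(\gamma) \to +\infty$ as $\gamma \to \pm\infty$; hence there is a negative root $\gamma_- < 0$ and a positive root $\gamma_+ > 1$. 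The key consequence is that for any $\gamma \in (\gamma_-, \gamma_+)$ — in particular for $\gamma$ slightly larger than $1$ — the function $\phi_\gamma = x^\gamma$ is a strict supersolution: $-\mathcal{L}\phi_\gamma + a_1\phi_\gamma = -q(\gamma)\,x^\gamma > 0$.

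Next I would fix such a $\gamma \in (1, \gamma_+)$ and, for $\varepsilon > 0$, consider $w_\varepsilon(x) = \underline{v}^i(x) - \varepsilon(x^\gamma + x^{\gamma_-})$ (or a single term $x^\gamma + x^{\gamma_-}$-type barrier that blows up both at $0$ and at $\infty$). Because $\underline{v}^i$ has at most linear growth and $\gamma > 1$, we have $w_\varepsilon(x) \to -\infty$ as $x \to \infty$; because $x^{\gamma_-} \to +\infty$ as $x \downarrow 0$ while $\underline{v}^i$ stays bounded near $0$ (linear growth), $w_\varepsilon(x) \to -\infty$ as $x \downarrow 0$. Therefore $w_\varepsilon$ attains its maximum over $(0,\infty)$ at some interior point $x_0$. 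Applying the viscosity subsolution property of $\underline{v}^i$ at $x_0$ with test function $\varepsilon(x^\gamma + x^{\gamma_-}) + w_\varepsilon(x_0)$ gives $-\mathcal{L}\big(\varepsilon(x^\gamma + x^{\gamma_-})\big)(x_0) + a_1\big(\underline{v}^i(x_0)\big) \le 0$, and combining with the strict-supersolution inequality for $x^\gamma$ and $x^{\gamma_-}$ yields $a_1\big(w_\varepsilon(x_0)\big) < 0$, hence $\sup_{(0,\infty)} w_\varepsilon < 0$. Letting $\varepsilon \downarrow 0$ gives $\underline{v}^i \le 0$ on $(0,\infty)$. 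The supersolution statement follows by applying this to $-\overline{v}^i$.

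The main obstacle is the behaviour at the two degenerate boundary points of the operator $\mathcal{L} = \tfrac12\sigma^2 x^2 \frac{d^2}{dx^2} + bx\frac{d}{dx}$: since the diffusion coefficient vanishes at $x=0$ and the domain is unbounded, one cannot directly appeal to a compact-domain maximum principle, and one must be careful that the perturbation $x^{\gamma_-}$ (which is what controls the $x\downarrow 0$ end) is genuinely a supersolution there — this is exactly why the root structure $\gamma_- < 0 < 1 < \gamma_+$ of $q$ matters, and why Assumption~\ref{Assumption3}'s requirement $a_1 > \max\{b,0\}$ (equivalently $q(0) < 0$ and $q(1) < 0$) cannot be dropped. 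A secondary technical point is to phrase the whole argument at the level of viscosity sub/supersolutions rather than classical ones, so that it applies to $v(\cdot)$ as produced by the nonlinear Feynman–Kac formula; this is routine given the explicit smooth barriers, but needs to be stated with the doubling-free, single-barrier version of the comparison lemma since only one of the two functions is being perturbed.
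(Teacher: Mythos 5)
Your proof is correct and follows essentially the same route as the paper: both build power-function barriers $x^\gamma$ with one exponent larger than $1$ (to beat the linear growth at infinity) and one negative exponent (to handle the degenerate boundary at $x=0$), chosen so that $\tfrac12\sigma^2\gamma(\gamma-1)+b\gamma-a_1\le 0$, subtract an $\varepsilon$-multiple of the barrier, and conclude at an interior maximum before letting $\varepsilon\downarrow 0$. The paper packages the final step as a comparison principle on a finite interval $[\underline{x},\overline{x}]$ where the perturbed function is nonpositive at the endpoints, whereas you apply the viscosity subsolution inequality directly at the interior maximum point; these are the same argument.
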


\noindent\textbf{The structure of the switching region
$\mathcal{S}^1$}

Define $G_1(x)=v^1(x)-v^2(x)+g^{12}$. From (\ref{ode}), it is easy
to see that $G_1(\cdot)$ is the solution of the following ODE:
\begin{equation}\label{s4}
-\mathcal{L}G_1(x)+a_1G_1(x)=\lambda(-G_1(x))^+-\lambda(G_1(x)-g^{12}-g^{21})^++a_1g^{12}-F(x).
\end{equation}
From Proposition \ref{Propositionlineargrowth}, $G_1(\cdot)$ has at
most linear growth and is Lipschitz continuous. The switching region
$\mathcal{S}^1=\{x\in(0,\infty): G_1(x)\leq 0\}$.

\begin{lemma}\label{l1} Suppose that Assumptions \ref{Assumption3},
\ref{Assumption4} and \ref{s3} hold. If there exists
$x^0\in\mathcal{S}^1$, then $[x^0,\infty)\in\mathcal{S}^1$. Hence,
if $\mathcal{S}^1\neq\phi$, then $\bar{x}^1=\infty$.
\end{lemma}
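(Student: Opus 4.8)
The plan is to establish the sharper statement that $G_1(\cdot)$ is non-increasing on $(0,\infty)$. Granting this, the lemma is immediate: if $G_1(x^0)\le 0$ and $x\ge x^0$ then $G_1(x)\le G_1(x^0)\le 0$, so $x\in\mathcal{S}^1$; hence $[x^0,\infty)\subseteq\mathcal{S}^1$ and $\bar{x}^1=\sup\mathcal{S}^1=\infty$ whenever $\mathcal{S}^1\neq\phi$. To prove monotonicity, I would first rewrite (\ref{s4}) as $-\mathcal{L}G_1(x)+\Psi(G_1(x))=a_1g^{12}-F(x)$, where $\Psi(y)=a_1y-\lambda(-y)^++\lambda(y-g^{12}-g^{21})^+$ is continuous, non-decreasing and piecewise linear (hence Lipschitz), with slope $a_1$ on $[0,g^{12}+g^{21}]$ and slope $a_1+\lambda$ elsewhere, and the right-hand side $a_1g^{12}-F(x)$ is strictly decreasing by Assumption \ref{s3}. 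Since the right-hand side is continuous in $x$ and $\Psi$ is Lipschitz, standard regularity for this ODE (non-degenerate away from $x=0$) gives $G_1\in C^2((0,\infty))$ with locally Lipschitz second derivative; moreover $w:=G_1'$ is bounded on $(0,\infty)$ by the Lipschitz continuity of $v^1,v^2$ from Proposition \ref{Propositionlineargrowth}.

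Next I would differentiate the equation in $x$ (in the a.e.\ sense). Using $\frac{d}{dx}\mathcal{L}G_1=\tilde{\mathcal{L}}w+bw$ with $\tilde{\mathcal{L}}=\tfrac12\sigma^2x^2\frac{d^2}{dx^2}+(b+\sigma^2)x\frac{d}{dx}$, together with $\frac{d}{dx}\Psi(G_1(x))=\Psi'(G_1(x))\,G_1'(x)$ a.e., this yields
\[
-\tilde{\mathcal{L}}w(x)+\bigl(\Psi'(G_1(x))-b\bigr)w(x)=-F'(x)\le 0\qquad\text{for a.e.\ }x>0.
\]
The zeroth-order coefficient satisfies $\Psi'(G_1(x))-b\ge a_1-b>0$ by Assumption \ref{Assumption3} and is bounded, so $w$ is a bounded subsolution of an equation of the same type as in Proposition \ref{propositioncompare} but with a strictly positive, bounded zeroth-order coefficient and a drift shifted by $\sigma^2x$. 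Since the proof of that comparison principle uses only positivity of the zeroth-order term together with a growth/barrier argument at $0$ and at $\infty$, it applies here and gives $w=G_1'\le 0$ on $(0,\infty)$, i.e.\ $G_1$ is non-increasing, which completes the proof.

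The step I expect to be the main obstacle is making this differentiation rigorous, since $\Psi$ is only piecewise linear. One option is to push through $G_1\in C^2$ and differentiate almost everywhere, noting that $\frac{d}{dx}\Psi(G_1(x))=0$ a.e.\ on the level sets $\{G_1=0\}$ and $\{G_1=g^{12}+g^{21}\}$, consistently with $\Psi'(G_1)G_1'$ because $G_1'=0$ a.e.\ on those sets. A cleaner alternative that avoids differentiation altogether is to work with the difference quotients $w_h(x)=h^{-1}\bigl(G_1(x+h)-G_1(x)\bigr)$: because all slopes of $\Psi$ lie in $\{a_1,a_1+\lambda\}$, each $w_h$ is a subsolution of an inequality of the displayed form with a zeroth-order coefficient bounded below by $a_1-b$ uniformly in $h>0$, so the comparison principle gives $w_h\le 0$ for every $h>0$, and one lets $h\downarrow 0$. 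One should also check the behaviour of $w$ near the degenerate boundary $x=0$, but boundedness of $w$ (from the Lipschitz continuity of the value functions) is exactly what the barrier argument requires, so no new difficulty arises. (One could instead argue by contradiction on the connected components of $\{G_1>0\}$, but the monotonicity argument is shorter and yields more.)
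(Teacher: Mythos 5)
Your proposal is correct and follows essentially the same route as the paper: differentiate the ODE (\ref{s4}) for $G_1$, observe that $w=G_1'$ is a bounded subsolution of an equation with drift $(b+\sigma^2)x$ and zeroth-order coefficient bounded below by $a_1-b>0$, and invoke the comparison principle of Proposition \ref{propositioncompare} to conclude $G_1'\le 0$. Your additional care about the a.e.\ chain rule for the piecewise-linear nonlinearity (or the difference-quotient alternative) and about checking that the barrier argument tolerates the shifted drift and the smaller coefficient $a_1-b$ fills in details the paper passes over with ``it is easy to see.''
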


\begin{proof}
Since $G_1(\cdot)$ is Lipschitz continues, its derivative
${G_1'}(x)=\frac{d}{dx}G_1(x)$ is bounded. Moreover. from
(\ref{s4}), it is easy to see ${G_1'}(\cdot)$ is the subsolution of
the following ODE:
\begin{equation}\label{s5}
-\frac{1}{2}\sigma^2 x^2 \frac{d^2 G_1'}{dx^2}-(b+\sigma^2)
x\frac{dG_1'}{dx}+(a_1-b+\lambda H(-G_1)+\lambda
H(G_1-g^{12}-g^{21}))G_1'=-F'(x)\leq 0,
\end{equation}
where $H(x)=\mathbf{1}_{[0,\infty)}(x)$. Since $a_1>b$, by using the
subsolution property in Proposition \ref{propositioncompare}, we
obtain $G_1'(x)\leq 0$. Thus, if $G_1(x^0)\leq 0$, then $G_1(x)\leq
0$ for $x\in[x^0,\infty)$.
\end{proof}

\begin{proposition}\label{p1} Suppose that Assumptions \ref{Assumption3}, \ref{Assumption4} and
\ref{s3} hold. Then
\begin{enumerate}
\item If $a_1g^{12}\geq F(\infty)$, then $\underline{x}^1=\infty$, i.e. $\mathcal{S}^1 = \phi$.
\item If $a_1g^{12}<F(\infty)$, then $\underline{x}^1\in(0,\infty)$ and $\mathcal{S}^1=[\underline{x}^1,\infty)$.
\end{enumerate}
\end{proposition}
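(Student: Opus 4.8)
The plan is to analyze the ODE \eqref{s4} for $G_1$ together with the structural information already established, namely that $G_1'\le 0$ everywhere (Lemma \ref{l1}) and that $G_1$ has at most linear growth. First I would extract the boundary behaviour at the two ends of $(0,\infty)$. Near $x=0$: since $h(0)=0$ and $F$ is increasing and nonnegative with $F(0)=0$, the right-hand side of \eqref{s4} at $x=0$ equals $\lambda(-G_1(0))^+-\lambda(G_1(0)-g^{12}-g^{21})^++a_1g^{12}$; I would argue (using the comparison principle of Proposition \ref{propositioncompare} applied to $G_1$ minus a suitable constant supersolution/subsolution, or a direct boundary analysis of the degenerate ODE at $x=0$ where $\mathcal{L}$ vanishes) that $G_1(0^+)=g^{12}>0$. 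Hence $0\notin\mathcal{S}^1$, so $\underline{x}^1>0$ in both cases. At $x=\infty$: because $G_1$ is monotone decreasing and of linear growth, $\lim_{x\to\infty}G_1(x)=:G_1(\infty)$ exists in $[-\infty,g^{12})$; I would determine its sign by a comparison/asymptotic argument showing $a_1G_1(\infty)=a_1g^{12}-F(\infty)+\lambda(-G_1(\infty))^+-\lambda(G_1(\infty)-g^{12}-g^{21})^+$ in the limit, from which $G_1(\infty)\le 0$ iff $a_1g^{12}\le F(\infty)$ (treating the penalty terms carefully: if $a_1g^{12}\ge F(\infty)$ one checks the constant $g^{12}-\tfrac1{a_1}F(\infty)\cdot 0\ge 0$ region is invariant, while if $a_1g^{12}<F(\infty)$ one produces a point where $G_1<0$).

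Granting these two facts, the conclusion follows quickly. In case (1), $a_1g^{12}\ge F(\infty)$: I would show $G_1(x)>0$ for all $x\in(0,\infty)$ by exhibiting the constant $\overline{v}\equiv 0$ as a supersolution comparison — more precisely, I would verify that $G_1$ cannot touch $0$ from above, using that at any putative zero $x^0$ the ODE \eqref{s4} forces $-\mathcal{L}G_1(x^0)=a_1g^{12}-F(x^0)+\lambda(g^{12}+g^{21})\wedge(\cdot)$; since $F(x^0)<F(\infty)\le a_1g^{12}$ and the penalty term $-\lambda(G_1-g^{12}-g^{21})^+$ vanishes near $G_1=0$ (as $g^{12}+g^{21}>0$), the right side is strictly positive, contradicting the sign constraints on $\mathcal{L}G_1$ at an interior minimum-type touching point. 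Hence $\mathcal{S}^1=\phi$ and $\underline{x}^1=\infty$. In case (2), $a_1g^{12}<F(\infty)$: monotonicity of $F$ gives a point $x_*$ with $a_1g^{12}<F(x_*)$; I would then use the comparison principle to show $G_1(x_*)<0$ (comparing $G_1$ restricted to $[x_*,\infty)$ against $0$, since on the set $\{G_1<0\}$ equation \eqref{s4} reads $-\mathcal{L}G_1+(a_1+\lambda)G_1=a_1g^{12}-F(x)<0$, making $G_1$ a strict subsolution there). Combined with $G_1(0^+)=g^{12}>0$ and continuity, there is a crossing point, so $\mathcal{S}^1\ne\phi$; then Lemma \ref{l1} gives $\mathcal{S}^1=[\underline{x}^1,\infty)$ with $\underline{x}^1\in(0,\infty)$ finite.

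The main obstacle I anticipate is the rigorous treatment of the degenerate boundary point $x=0$ and the behaviour at $x=\infty$ for the ODE \eqref{s4}, because the operator $\mathcal{L}=\tfrac12\sigma^2x^2\tfrac{d^2}{dx^2}+bx\tfrac{d}{dx}$ degenerates at $0$ and the comparison principle in Proposition \ref{propositioncompare} is stated for the homogeneous inequality $-\mathcal{L}\underline v+a_1\underline v\le 0$, so I must massage \eqref{s4} into that form by subtracting appropriate constant or affine functions and controlling the sign of the resulting forcing term via Assumption \ref{s3} (nonnegativity and monotonicity of $F$, $h(0)=0$) and Assumption \ref{Assumption4} ($g^{12}+g^{21}>0$, which is what kills the second penalty term near the relevant region). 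A secondary subtlety is ensuring the limit $G_1(\infty)$ satisfies the claimed limiting relation despite the nonlinearity; here the monotonicity $G_1'\le 0$ from Lemma \ref{l1} together with linear growth is exactly what guarantees $G_1$ and $\mathcal{L}G_1$ are well-enough behaved at infinity to pass to the limit.
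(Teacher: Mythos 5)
Your overall strategy mirrors the paper's --- evaluate the degenerate ODE at $x=0$ to rule out $\underline{x}^1=0$, exploit the monotonicity $G_1'\le 0$ from Lemma \ref{l1}, and decide what happens at infinity by comparison --- and the boundary step at $0$ is essentially fine (the paper does it by contradiction: if $G_1(0)\le 0$ then (\ref{s4}) at $x=0$ forces $G_1(0)=\frac{a_1g^{12}}{a_1+\lambda}>0$; your exact value $G_1(0^+)=g^{12}$ only holds when $g^{21}\ge 0$, but positivity holds in all cases). However, two of your key steps do not go through as written. In case (1), your ``touching point'' argument presumes the putative zero $x^0$ is an interior-minimum-type point with $G_1'(x^0)=0$ and $G_1''(x^0)\ge 0$, which is what gives the sign constraint on $\mathcal{L}G_1(x^0)$. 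But $G_1$ is nonincreasing by Lemma \ref{l1}, so its first zero is a monotone crossing, not a touching point, and no sign information on $\mathcal{L}G_1$ is available there. The working argument is global rather than pointwise: if $\mathcal{S}^1\ne\phi$, Lemma \ref{l1} gives $G_1\le 0$ on the entire half-line $[\underline{x}^1,\infty)$, where (\ref{s4}) linearizes to $-\mathcal{L}G_1+(a_1+\lambda)G_1=a_1g^{12}-F(x)>0$ (the second penalty term vanishes because $G_1\le 0<g^{12}+g^{21}$), and the supersolution half of Proposition \ref{propositioncompare} then forces $G_1>0$ on that half-line, a contradiction.

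In case (2), your argument for producing a point with $G_1<0$ is circular: you appeal to the form (\ref{s4}) takes ``on the set $\{G_1<0\}$'' in order to prove that this set is nonempty. Your fallback --- passing to the limit in the ODE to obtain an algebraic relation for $G_1(\infty)$ --- is not justified either, since the coefficients of $\mathcal{L}$ grow like $x^2$ and $x$ and there is no a priori reason that $\mathcal{L}G_1(x)\to 0$; monotonicity plus linear growth of $G_1$ does not control $\mathcal{L}G_1$ at infinity. The paper closes exactly this gap by contradiction: if $G_1>0$ everywhere, then (\ref{s4}) gives $-\mathcal{L}G_1+a_1G_1\le a_1g^{12}-F(x)$, so by comparison $G_1\le G$ where $G$ is the linear-growth solution of $-\mathcal{L}G+a_1G=a_1g^{12}-F$; the Feynman--Kac representation $G(x)=\mathbf{E}\left[\int_0^{\infty}e^{-a_1t}(a_1g^{12}-F(X_t^x))\,dt\right]$ together with Fatou's lemma and $a_1g^{12}<F(\infty)$ yields $\varlimsup_{x\to\infty}G(x)<0$, hence $G_1<0$ somewhere. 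Some quantitative step of this kind at infinity is indispensable; without it the second case remains unproved.
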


\begin{proof}
\begin{enumerate}

\item We claim that $\underline{x}^1 \neq 0$. If not, then
$\underline{x}^1=\inf\mathcal{S}^1=0$, so that $G_1(0)\leq 0$ by the
continuity of $G_1(\cdot)$ on $(0,\infty)$. On the other hand, from
(\ref{s4}), we have $G_1(0)=\dfrac{a_1g^{12}}{a_1+\lambda}>0$, which
is a contradiction.

Now we show that $\underline{x}^1 = \infty$. Suppose not. Then
$0<\underline{x}^1=\inf\mathcal{S}^1<\infty$. Due to the continuity
of $G_1(\cdot)$ on $(0,\infty)$, we have $G_1(\underline{x}^1)=0$,
so $\underline{x}^1\in\mathcal{S}^1$. From Lemma \ref{l1},
$[\underline{x}^1,\infty)\in\mathcal{S}^1$, and $G_1(x)\leq 0$ for
$x\in[\underline{x}^1,\infty)$. Therefore, (\ref{s4}) reduces to
\begin{equation*}
-\mathcal{L}G_1(x)+(a_1+\lambda)G_1(x)=a_1g^{12}-F(x)>0
\end{equation*}
on $[\underline{x}^1,\infty)$. But from the supersolution property
in Proposition \ref{propositioncompare}, we get $G_1(x)>0$ for
$x\in[\underline{x}^1,\infty)$, which is obscured.

\item
Similar to the proof in 1, we have $\underline{x}^1 \neq 0$.
Therefore, we only need to prove $\underline{x}^1 \neq \infty$. If
not, then $G_1(x)>0$ for $x\in(0,\infty)$, and (\ref{s4}) reduces to
\begin{equation*}
-\mathcal{L}G_1(x)+a_1G_1(x)=-\lambda(G_1(x)-g^{12}-g^{21})^++a_1g^{12}-F(x).
\end{equation*}

By the comparison principle in Proposition \ref{propositioncompare},
we have $G_1(x)\leq G(x)$, where $G(x)$ is the solution with linear
growth to the following ODE:
\begin{equation*}
-\mathcal{L}G(x)+a_1G(x)=a_1g^{12}-F(x).
\end{equation*}
The Feynman-Kac formula implies that
\begin{equation*}
G(x) = \mathbf{E}[\int^{\infty}_0 e^{-a_1t}(a_1g^{12}-F(X^x_t)) dt]
\end{equation*}
By Fatou lemma, we have
\begin{equation*}
G(\infty) = \varlimsup_{x\to\infty}\mathbf{E}[\int^{\infty}_0
e^{-a_1t}(rg^{12}-F(X^x_t)) dt] \leq
\mathbf{E}[\int^{\infty}_0e^{-a_1t}(rg^{12}-F(\infty))dt]<0.
\end{equation*}
Thus, we have $G_1(\infty)<0$. This is a contradiction.
\end{enumerate}
\end{proof}

The financial intuition behind Proposition \ref{p1} is that the
structure of the switching region of the regime $1$ depends on the
``instant loss'' of the switching cost $g^{12}$ and the ``net
running profit'' $F=h^2-h^1$. If $a_1g^{12}\geq F(\infty)$, which
means the loss by switching can not be compensated by the net
running profit, then one has no interest to switch; If
$a_1g^{12}<F(\infty)$, which means the net running profit may exceed
the loss due to the switching cost in some state, then one may
switch when the net running profit reaches some level at Poisson arrival times.\\

\noindent\textbf{The Structure of the switching region
$\mathcal{S}^2$}

Define $G_2(x)=v^2(x)-v^1(x)+g^{21}$, which is the solution to the
following ODE:
\begin{equation}\label{s6}
-\mathcal{L}G_2(x)+a_1G_2(x)=\lambda(-G_2(x))^+-\lambda(G_2(x)-g^{21}-g^{12})^++a_1g^{21}+F(x).
\end{equation}
From Proposition \ref{Propositionlineargrowth}, $G_2(\cdot)$ has at
most linear growth and is Lipschitz continuous. The switching region
$\mathcal{S}^2=\{x\in(0,\infty): G_2(x)\leq 0\}$.

\begin{lemma}\label{l2}
Suppose that Assumptions \ref{Assumption3}, \ref{Assumption4} and
{\ref{s3}} hold. If there exists $x^0\in\mathcal{S}^2$, then
$(0,x^0]\in\mathcal{S}^2$. Hence, if $\mathcal{S}^2\neq\phi$, then
$\underline{x}^2=0$.
\end{lemma}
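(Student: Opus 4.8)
The plan is to mirror the proof of Lemma \ref{l1}, reversing the direction of monotonicity: I will show that $G_2(\cdot)$ is non-decreasing on $(0,\infty)$, and then $(0,x^0]\subseteq\mathcal{S}^2$ for any $x^0\in\mathcal{S}^2$ is immediate. Since $G_2(\cdot)$ is Lipschitz continuous by Proposition \ref{Propositionlineargrowth}, its derivative $G_2'(x)=\frac{d}{dx}G_2(x)$ is bounded, hence of at most linear growth.

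First I would differentiate the ODE (\ref{s6}), interpreting the result in the viscosity sense exactly as the companion claim about $G_1'$ is used in the proof of Lemma \ref{l1}. Writing $H(x)=\mathbf{1}_{[0,\infty)}(x)$ and using the identity $\frac{d}{dx}\mathcal{L}G_2=\mathcal{L}G_2'+\sigma^2 x\frac{d}{dx}G_2'+bG_2'$, the constant term $a_1g^{21}$ disappears, while $\lambda(-G_2)^+$ and $-\lambda(G_2-g^{21}-g^{12})^+$ contribute $-\lambda H(-G_2)G_2'$ and $-\lambda H(G_2-g^{21}-g^{12})G_2'$ respectively; hence $G_2'$ is a supersolution of
\[
-\frac{1}{2}\sigma^2 x^2\frac{d^2 G_2'}{dx^2}-(b+\sigma^2)x\frac{dG_2'}{dx}+\Bigl(a_1-b+\lambda H(-G_2)+\lambda H(G_2-g^{21}-g^{12})\Bigr)G_2'=F'(x)\ge 0,
\]
where $F'(x)\ge 0$ because $F$ is strictly increasing by Assumption \ref{s3}. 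Since $a_1>b$ by Assumption \ref{Assumption3} and the two indicator terms are non-negative, the zeroth-order coefficient is strictly positive, so the supersolution version of Proposition \ref{propositioncompare} (applied, as in Lemma \ref{l1}, to the shifted operator) yields $G_2'(x)\ge 0$ for all $x\in(0,\infty)$.

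Hence $G_2(\cdot)$ is non-decreasing, so if $x^0\in\mathcal{S}^2$, i.e.\ $G_2(x^0)\le 0$, then $G_2(x)\le G_2(x^0)\le 0$ for all $x\in(0,x^0]$, giving $(0,x^0]\subseteq\mathcal{S}^2$; in particular $\mathcal{S}^2\neq\phi$ forces $\underline{x}^2=\inf\mathcal{S}^2=0$. The main obstacle, as in Lemma \ref{l1}, is the rigorous justification of differentiating (\ref{s6}): $G_2$ is only Lipschitz and the indicator coefficients $H(-G_2)$ and $H(G_2-g^{21}-g^{12})$ are discontinuous, so both the differentiated equation and the comparison argument have to be phrased in the viscosity framework with the variable (but uniformly positive) zeroth-order coefficient displayed above; once that technical point is handled the conclusion is immediate.
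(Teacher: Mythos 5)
Your argument is correct, and the differentiated equation for $G_2'$ (with the sign of $F'$ reversed and the zeroth-order coefficient $a_1-b+\lambda H(-G_2)+\lambda H(G_2-g^{21}-g^{12})>0$) is computed correctly, so the supersolution version of Proposition \ref{propositioncompare} does give $G_2'\geq 0$ and the monotonicity conclusion follows. However, you take a longer route than the paper. Since $G_1(x)=v^1(x)-v^2(x)+g^{12}$ and $G_2(x)=v^2(x)-v^1(x)+g^{21}$, one has the algebraic identity $G_1(x)+G_2(x)=g^{12}+g^{21}$, a constant; hence $G_2'(x)=-G_1'(x)\geq 0$ follows in one line from the fact, already established in the proof of Lemma \ref{l1}, that $G_1'\leq 0$. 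This is exactly what the paper does, and it avoids having to re-justify the differentiation of (\ref{s6}) and the comparison argument for the second equation. Your mirror-image derivation is self-contained and carries no more technical risk than the paper's own treatment of $G_1'$ (the viscosity-sense differentiation and the use of the comparison principle for the shifted operator are issues in both places, and you flag them appropriately), but it duplicates work that the identity $G_2=-G_1+(g^{12}+g^{21})$ renders unnecessary.
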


\begin{proof}
From Lemma \ref{l1}, we have $G_2'(x)=-G_1'(x)\geq0$. Thus, if
$G_2(x^0)\leq 0$, then $G_2(x)\leq 0$ for  $x\in(0,x^0]$.
\end{proof}

\begin{proposition}\label{p2}
Suppose that Assumptions \ref{Assumption3}, \ref{Assumption4} and
\ref{s3} hold. Then
\begin{enumerate}
\item If $a_1g^{21}\geq 0$, then $\overline{x}^2=0$, i.e. $\mathcal{S}^2 = \phi$.
\item If $-F(\infty)<a_1g^{21}<0$, then $\overline{x}^2\in(0,\infty)$ and $\mathcal{S}^2=(0,\overline{x}_2]$.
\item If $a_1g^{21}\leq-F(\infty)$, then $\mathcal{S}^2=(0,\infty)$.
\end{enumerate}
\end{proposition}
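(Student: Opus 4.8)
The plan is to read the structure of $\mathcal{S}^2$ off from three facts already available: $G_2(\cdot)$ is continuous with at most linear growth by Proposition \ref{Propositionlineargrowth}; it is non-decreasing, since $G_2'(x)=-G_1'(x)\geq 0$ by the proof of Lemma \ref{l2}; and $\mathcal{S}^2=\{x\in(0,\infty):G_2(x)\leq 0\}$. By monotonicity and continuity, $\mathcal{S}^2$ is necessarily $\phi$, or $(0,\overline{x}^2]$ with $0<\overline{x}^2<\infty$, or $(0,\infty)$, so the three cases amount to locating $\overline{x}^2=\sup\mathcal{S}^2$. As a preliminary I would compute the boundary value $G_2(0):=\lim_{x\downarrow 0}G_2(x)$ exactly as for $G_1(0)$ in Proposition \ref{p1}: since $\mathcal{L}$ degenerates at $0$ and $F(0)=h^2(0)-h^1(0)=0$, evaluating (\ref{s6}) at $x=0$ under the hypothesis $G_2(0)\leq 0$ gives $(-G_2(0))^+=-G_2(0)$ and, as $G_2(0)-g^{21}-g^{12}\leq-(g^{12}+g^{21})<0$ by Assumption \ref{Assumption4}, $(G_2(0)-g^{21}-g^{12})^+=0$, so that $(a_1+\lambda)G_2(0)=a_1g^{21}$; in particular $g^{21}<0$ forces $G_2(0)<0$ and hence $\mathcal{S}^2\neq\phi$. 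For Case 1 ($a_1g^{21}\geq 0$) I would argue by contradiction: if $\mathcal{S}^2\neq\phi$ then $\underline{x}^2=0$ by Lemma \ref{l2}, so $G_2(0)\leq 0$, the identity above gives $(a_1+\lambda)G_2(0)=a_1g^{21}\geq 0$, hence $G_2(0)=0$ and $a_1g^{21}=0$; monotonicity then gives $G_2\geq 0$, so $G_2\equiv 0$ on $\mathcal{S}^2$, and feeding this back into (\ref{s6}) forces $F\equiv 0$ on the (nonempty) interior of $\mathcal{S}^2$, contradicting that $F$ is strictly increasing with $F(0)=0$. Thus $\mathcal{S}^2=\phi$.

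For Case 2 ($-F(\infty)<a_1g^{21}<0$) the preliminary already gives $\mathcal{S}^2\neq\phi$ and $\overline{x}^2>0$, so the only thing to prove is $\overline{x}^2<\infty$, which I would get from a lower bound on $G_2$. Writing (\ref{s6}) as $-\mathcal{L}G_2+(a_1+\lambda)G_2=\lambda(G_2)^+-\lambda(G_2-g^{21}-g^{12})^++a_1g^{21}+F(x)$ and using $(s)^+\geq(s-c)^+$ with $c=g^{12}+g^{21}>0$, one sees that $G_2$ is a supersolution on $(0,\infty)$ of the linear ODE $-\mathcal{L}w+(a_1+\lambda)w=a_1g^{21}+F(x)$; by the comparison principle (Proposition \ref{propositioncompare}, whose proof goes through with $a_1+\lambda>b$ in place of $a_1$) we get $G_2\geq\overline{G}_2$, where $\overline{G}_2(x)=\mathbf{E}[\int_0^\infty e^{-(a_1+\lambda)t}(a_1g^{21}+F(X_t^x))\,dt]$ is the linear-growth solution given by Feynman--Kac. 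Since $X_t^x$ is increasing in $x$ with $X_t^x\uparrow\infty$ and $F$ is increasing, Fatou's lemma yields $\varliminf_{x\to\infty}\overline{G}_2(x)\geq(a_1g^{21}+F(\infty))/(a_1+\lambda)>0$, so $G_2(x)>0$ for all large $x$, forcing $\overline{x}^2<\infty$; continuity then gives $\mathcal{S}^2=(0,\overline{x}^2]$ with $G_2(\overline{x}^2)=0$.

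For Case 3 ($a_1g^{21}\leq-F(\infty)$), note first that $F(\infty)>0$ (as $F$ is strictly increasing from $F(0)=0$), so $g^{21}<0$ and $\mathcal{S}^2\neq\phi$; I would then suppose $\overline{x}^2<\infty$ and derive a contradiction. In that case $G_2(\overline{x}^2)=0$ and $G_2>0$ on $(\overline{x}^2,\infty)$, so on that half-line $(-G_2)^+=0$ and (\ref{s6}) gives $-\mathcal{L}G_2+a_1G_2=-\lambda(G_2-g^{21}-g^{12})^++a_1g^{21}+F(x)\leq a_1g^{21}+F(\infty)\leq 0$; hence $G_2$ is a subsolution of $-\mathcal{L}w+a_1w\leq 0$ on $(\overline{x}^2,\infty)$ with $G_2(\overline{x}^2)=0$ and linear growth at $\infty$. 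Applying It\^o's formula to $e^{-a_1t}G_2(X_t^x)$ stopped at the first hitting time $\theta$ of $\overline{x}^2$ and letting $t\to\infty$ — the boundary term being $e^{-a_1\theta}G_2(\overline{x}^2)=0$ and the contribution on $\{\theta=\infty\}$ vanishing because $a_1>b$ and $G_2$ has linear growth — gives $G_2\leq 0$ on $(\overline{x}^2,\infty)$, a contradiction. Therefore $\overline{x}^2=\infty$ and $\mathcal{S}^2=(0,\infty)$.

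The step I expect to be most delicate is the half-line comparison used in Case 3 (and, more minor, verifying that Proposition \ref{propositioncompare} still holds with $a_1+\lambda$ in place of $a_1$ in Case 2): Proposition \ref{propositioncompare} is formulated on all of $(0,\infty)$ with only a growth condition, whereas here one must re-run its proof on a proper sub-interval with a Dirichlet-type value imposed at a finite endpoint, tracking whether the geometric Brownian motion ever reaches that endpoint and, when it does not, the $L^1$-decay of $e^{-a_1t}G_2(X_t^x)$ coming from $a_1>b$.
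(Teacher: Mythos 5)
Your overall strategy is sound and, for Cases 1 and 2, runs parallel to the paper's: the paper also evaluates (\ref{s6}) at $x=0$ to get $G_2(0)=a_1g^{21}/(a_1+\lambda)$ when $G_2(0)\leq 0$, and also rules out $\overline{x}^2=\infty$ in Case 2 by comparing with the linear-growth Feynman--Kac solution of $-\mathcal{L}G+(a_1+\lambda)G=a_1g^{21}+F$ and applying Fatou. Two of your variants are genuinely different and both work. In Case 2 you only use that $G_2$ is a \emph{supersolution} of the linear equation (via $\lambda(s)^+\geq\lambda(s-c)^+$), whereas the paper first assumes $G_2\leq 0$ everywhere so that (\ref{s6}) reduces exactly to the linear equation; your version is slightly more robust since it needs no reduction. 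In Case 3 the paper does not argue by contradiction at all: it constructs the linear-growth solution $G_0$ of $-\mathcal{L}G_0+(a_1+\lambda)G_0=a_1g^{21}+F$, shows $G_0\leq 0$ by the subsolution property, checks that $G_0$ then solves the full nonlinear equation (\ref{s6}), and invokes uniqueness to conclude $G_2=G_0\leq 0$. That route stays entirely within Proposition \ref{propositioncompare} as stated on $(0,\infty)$ and avoids the half-line Dirichlet comparison you correctly identify as the delicate step of your argument. Your half-line argument does go through (It\^o on $e^{-a_1t}G_2(X^x_{t\wedge\theta})$, zero boundary term at $\overline{x}^2$, and $\mathbf{E}[e^{-a_1t}X^x_t]=xe^{(b-a_1)t}\to 0$ kills the tail), but it requires you to re-prove a maximum principle not covered by the stated proposition; the paper's construction-plus-uniqueness is the cheaper route.

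The one genuine gap is in your preliminary: the identity $(a_1+\lambda)G_2(0)=a_1g^{21}$ is derived \emph{under the hypothesis} $G_2(0)\leq 0$, so the inference ``$g^{21}<0$ forces $G_2(0)<0$, hence $\mathcal{S}^2\neq\phi$'' is circular as written --- you must separately exclude $G_2(0)>0$. This matters because Cases 2 and 3 both lean on $\mathcal{S}^2\neq\phi$. It is fixable: if $G_2(0)>0$ then $(-G_2(0))^+=0$ and either $(G_2(0)-g^{21}-g^{12})^+=0$, giving $G_2(0)=g^{21}<0$, a contradiction, or $G_2(0)>g^{21}+g^{12}$, in which case $(a_1+\lambda)G_2(0)=a_1g^{21}+\lambda(g^{21}+g^{12})$ and the consistency requirement $G_2(0)>g^{21}+g^{12}$ is equivalent to $a_1g^{12}<0$, contradicting $g^{12}>0$ from Assumption \ref{s3}. (The paper avoids this by instead assuming $\mathcal{S}^2=\phi$, so $G_2>0$ on all of $(0,\infty)$, and comparing globally with the solution $G$ of $-\mathcal{L}G+a_1G=a_1g^{21}+F$, which has $G(0)=g^{21}<0$.) Your Case 1 is unaffected, since there the hypothesis $G_2(0)\leq 0$ is legitimately supplied by Lemma \ref{l2} and continuity, and your observation that $G_2\equiv 0$ on an interval would force $F$ constant there is a precise version of the paper's parenthetical remark.
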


\begin{proof}
\begin{enumerate}
\item
If there exists $\overline{x}^2\in(0,\infty]$, then $G_{2}(0)< 0$
(From (\ref{s6}), it is easy to see that there does not exist any
interval in which $G_2$ keeps constant.). From (\ref{s6}), we can
see that $G_{2}(0) = \dfrac{a_1g^{21}}{a_1+\lambda}\geq 0$. This is
a contradiction, so $\bar{x}^2=\sup\mathcal{S}^2$ must be zero.

\item
Thanks to Lemma \ref{l2}, we only need to show that
$\overline{x}^2\neq 0$ and $\overline{x}^2\neq \infty$.

If $\overline{x}_2=0$, then $G_2(x)>0$ for $x\in(0,\infty)$, and
$G_2(0)\geq 0$ due to the continuity of $G_2(\cdot)$. Hence,
(\ref{s6}) reduces to
\begin{equation*}
-\mathcal{L}G_2(x)+a_1G_2(x)=-\lambda(G_2(x)-g^{21}-g^{12})^++a_1g^{21}+F(x).
\end{equation*}
By the comparison principle in Proposition \ref{propositioncompare},
we have $G_2(x)\leq G(x)$, where $G(x)$ is the solution with linear
growth to the following ODE:
\begin{equation*}
-\mathcal{L}G(x)+a_1G(x)=a_1g^{21}+F(x).
\end{equation*}
By continuity of both $G_2(\cdot)$ and $G(\cdot)$, $G_2(0)\leq
G(0)=g_{21}<0$, which is a contradiction. Thus, we have
$\overline{x}^2\neq 0$.

If $\overline{x}_2=\infty$, i.e. $G_2(x)\leq 0$ for
$x\in(0,\infty)$, then (\ref{s6}) reduces to
\begin{equation*}
-\mathcal{L}G_2(x)+(a_1+\lambda)G_2(x)=a_1g^{21}+F(x).
\end{equation*}
The Feynman-Kac formula implies that
\begin{equation*}
G_2(x) = \mathbf{E}[\int^{\infty}_0
e^{-(a_1+\lambda)t}(a_1g^{21}+F(X^x_t))dt]
\end{equation*}
By Fatou lemma, we have
\begin{equation*}
G_2(\infty) = \varliminf_{x\to\infty}\mathbf{E}[\int^{\infty}_0
e^{-(a_1+\lambda)t}(a_1g^{21}+F(X^x_t))dt] \geq
\mathbf{E}[\int^{\infty}_0e^{-(a_1+\lambda)t}(a_1g^{21}+F(\infty))dt]>0
\end{equation*}
This is a contradiction. Thus, we have $\overline{x}^2\neq\infty$.

\item
Suppose that $G_0(\cdot)$ is the solution with linear growth to the
following ODE:
\begin{equation*}
-\mathcal{L}G_0(x)+(a_1+\lambda)G_0(x)=a_1g^{21}+F(x).
\end{equation*}
Since $a_1g^{21}+F(x)<a_1g^{21}+F(\infty)\leq 0$, the subsolution
property in Proposition \ref{propositioncompare} implies that
$G_0(x)\leq 0$ for $x\in(0,\infty)$. Thus $G_0(\cdot)$ is one
solution with linear growth to (\ref{s6}). Due to the uniqueness of
the solution to (\ref{s6}), we must have $G_2(x)=G_0(x)\leq 0$ for
$x\in(0,\infty)$, which means $\mathcal{S}^2 = (0,\infty)$.
\end{enumerate}
\end{proof}

The financial intuition behind Proposition \ref{p2} is that (1) when
the switching cost from a ``higher regime'' to a ``lower regime'' is
non-negative, it is unnecessary to switch; (2) if the switching cost
is negative and can ``compensate'' the loss due to switching to a
``lower regime'' in some state, the player would switch to a ``lower
regime'' at some level at Poisson arrival times; (3) if the profit
from switching cost exceeds the maximum loss from the ``net running
profit'', the player would switch to a ``lower regime'' from a
``higher regime'' at Poisson arrival times.

\appendix
\section{Appendix}

\subsection{Proof of Proposition \ref{PropositionBSDE}}
\noindent\textbf{Existence of solutions to the infinite horizon BSDE
system (\ref{IBSDE1})}

The idea is to truncate the infinite horizon BSDE system
(\ref{IBSDE1}) on $[0,\infty)$ to a finite horizon one on $[0,n]$
for any $n\geq 0$:
\begin{equation}\label{TBSDE1}
Y_t^i(n)=\int_t^nf_s^i(Y_s(n),Z_s(n))+\lambda\max\left\{0,
\mathcal{M}Y_s^i(n)-Y_s^i(n)\right\}ds-\int_t^{n}Z_s^i(n)dW_s.
\end{equation}
Then for $n\geq m\geq 0$, we consider the difference of two
truncated equations truncated on different time intervals $[0,n]$
and $[0,m]$:
\begin{align*}
Y_t^i(n)-Y_t^i(m)=&\
\int_t^nf_s^i(Y_s(n),Z_s(n))-f_s^i(Y_s(m),Z_s(m))+f_s^i(Y_s(m),Z_s(m))\mathbf{1}_{\{s\geq
m\}}ds\\
&+\int_t^n\lambda\left(\mathcal{M}Y_s^i(n)-Y_s^i(n)\right)^+-\lambda\left(\mathcal{M}Y_s^i(m)-Y_s^i(m)\right)^+
+\lambda\left(\mathcal{M}Y_s^i(m)-Y_s^i(m)\right)^+\mathbf{1}_{\{s\geq
m\}}ds\\
&-\int_t^{n}\left(Z_s^i(n)-Z_s^i(m)+Z_s^i(m)\mathbf{1}_{\{s\geq
m\}}\right)dW_s
\end{align*}
Note that $Y_t(m)=Z_t(m)=0$ on $\{t\geq m\}$. Hence, we have
\begin{equation*}
f_s^i(Y_s(m),Z_s(m))\mathbf{1}_{\{s\geq
m\}}=f_s^i(0,0)\mathbf{1}_{\{s\geq m\}},
\end{equation*}
and
\begin{equation*}
\lambda\left(\mathcal{M}Y_s^i(m)-Y_s^i(m)\right)^+\mathbf{1}_{\{s\geq
m\}}=Z_s^i(m)\mathbf{1}_{\{s\geq m\}}=0.\\
\end{equation*}

Now apply It\^o's formula to $e^{2at}|Y_t(n)-Y_t(m)|^2$,
\begin{align}\label{ITO}
&\ e^{2at}|Y_t({n})-Y_t({m})|^2\nonumber\\
=&\ -\int_t^{n}2a e^{2a s}|Y_s(n)-Y_s(m)|^2ds\nonumber\\
&\ +\int_t^n2e^{2a
s}(Y_s(n)-Y_s(m))^*\left(f_s(Y_s(n),Z_s(n))-f_s(Y_s(m),Z_s(m))+f_s(0,0)\mathbf{1}_{\{s\geq m\}}\right)ds\nonumber\\
&\ +\int_t^n2\lambda e^{2a
s}[Y_s(n)-Y_s(m)]^*\left((\mathcal{M}Y_s(n)-Y_s(n))^+-(\mathcal{M}Y_s(m)-Y_s(m))^+\right)ds\nonumber\\
&\ -\int_t^n2e^{2a s}(Y_s(n)-Y_s(m))^*(Z_s(n)-Z_s(m))dW_s\nonumber\\
&\ -\int_t^ne^{2a s}||Z_s(n)-Z_s(m)||^2ds.
\end{align}

By the monotone condition and Lipschitz condition in Assumption
\ref{Assumption1}, the second term on the RHS of (\ref{ITO}) is
dominated by
\begin{align}\label{ITO1}
&\int_t^n-2a_1e^{2as}|Y_s(n)-Y_s(m)|^2+2a_2e^{2as}|Y_s(n)-Y_s(m)|\times||Z_s(n)-Z_s(m)||ds\nonumber\\
&+\int_m^n2e^{2as}(Y_s(n)-Y_s(m))^*f_s(0,0)ds\nonumber\\
\leq&\int_t^n-2a_1e^{2as}|Y_s(n)-Y_s(m)|^2ds\nonumber\\
&+\int_t^n\delta_1^2a_2^2e^{2a
s}|Y_s(n)-Y_s(m)|^2+\frac{1}{\delta_1^2}e^{2as}||Z_s(n)-Z_s(m)||^2ds\nonumber\\
&+\int_m^n\delta_2^2a_2^2
e^{2as}|Y_s(n)-Y_s(m)|^2+\frac{1}{\delta_2^2a_2^2}e^{2a
s}|f_s(0,0)|^2ds
\end{align}
for any constants $\delta_1,\delta_2$, where we used the elementary
inequality $2ab\leq \frac{1}{\delta^2}a^2+\delta^2b^2$.

Similarly, the third term on the RHS of (\ref{ITO}) is dominated by
\begin{align}\label{ITO2}
&\int_t^n2\lambda
e^{2as}|Y_s(n)-Y_s(m)|\times|(\mathcal{M}Y_s(n)-Y_s(n))-(\mathcal{M}Y_s(m)-Y_s(m))|ds\nonumber\\
\leq&\int_t^n2\lambda
e^{2as}\left(|Y_s(n)-Y_s(m)|^2+|Y_s(n)-Y_s(m)|\times|\mathcal{M}Y_s(n)-\mathcal{M}Y_s(m)|\right)ds\nonumber\\
\leq&\int_t^n2\lambda
e^{2as}\left(1+\frac{1}{2}+\frac{d}{2}\right)|Y_s(n)-Y_s(m)|^2ds
\end{align}

By plugging (\ref{ITO1}) and (\ref{ITO2}) into (\ref{ITO}), and
choosing $\delta_1^2>1$ and $\delta_1^2+\delta_2^2<\delta$, and $a$
as in the structure condition (\ref{Structure}), we obtain
\begin{align*}
&e^{2at}|Y_t({n})-Y_t({m})|^2+(1-\frac{1}{\delta_1^2})\int_t^ne^{2a
s}||Z_s(n)-Z_s(m)||^2ds\\
\leq& \int_m^n\frac{1}{\delta_2^2a_2^2}e^{2a
s}|f_s(0,0)|^2ds-\int_t^n2e^{2a
s}(Y_s(n)-Y_s(m))^*(Z_s(n)-Z_s(m))dW_s
\end{align*}

Taking expectation at $t=0$, we have
$$\mathbf{E}\left[\int_0^ne^{2a
s}||Z_s(n)-Z_s(m)||^2ds\right]\leq
\frac{\delta_1^2}{(\delta_1^2-1)\delta_2^2a_2^2}\mathbf{E}\left[\int_m^ne^{2a
s}|f_s(0,0)|^2ds\right]\downarrow 0$$ as $m,n\uparrow\infty$. Hence,
$(Z(n))_{n\geq 0}$ is a Cauchy sequence in
$\mathcal{H}^2_{a}(\mathbb{R}^{d\times n})$, and converges to some
limit process, denoted as $Z$. On the other hand, taking supremum
over $t\geq 0$, and then taking expectation, we obtain
\begin{align*}
&\mathbf{E}\left[\sup_{t\geq 0}e^{2at}|Y_t({n})-Y_t({m})|^2\right]\\
\leq&\frac{1}{\delta_2^2a_2^2}\mathbf{E}\left[\int_m^ne^{2a
s}|f_s(0,0)|^2ds\right]+\mathbf{E}\left[\sup_{t\geq 0}
|\int_t^{\infty}2e^{2a
s}(Y_s(n)-Y_s(m))^*(Z_s(n)-Z_s(m))dW_s|\right]
\end{align*}

The standard argument by using the BDG inequality implies that the
martingale term is in fact uniformly integrable, so by taking
$m,n\uparrow\infty$, we deduce that $(Y(n))_{n\geq 0}$ is a Cauchy
sequence in $\mathcal{S}^2_{a}(\mathbb{R}^{d})$, and converges to
some limit process, denoted as $Y$. It is standard to check that
$(Y,Z)$ indeed satisfies (\ref{IBSDE1}), so in order to verify that
they are one solution to the infinite horizon BSDE system
(\ref{IBSDE1}), we only need to prove that
\begin{equation}\label{desiredconvergence}
\lim_{t\uparrow\infty}\mathbf{E}\left[e^{2at}|Y_t|^2\right]=0.
\end{equation}
Indeed, since $(Y(n))_{n\geq 0}$ is a Cauchy sequence in
$\mathcal{S}^2_a(\mathbb{R}^d)$, for any $\epsilon>0$, there exists
$n$ large enough such that
$$\mathbf{E}\left[e^{2at}|Y_t|^2\right]
\leq 2\mathbf{E}\left[e^{2at}|Y_t-Y_t(n)|^2\right]+
2\mathbf{E}\left[e^{2at}|Y_t(n)|^2\right]<2\epsilon+2\mathbf{E}\left[e^{2at}|Y_t(n)|^2\right].$$
Letting $t\uparrow\infty$ and noting $Y_t(n)=0$ for $t\geq n$, we
obtain the desired convergence (\ref{desiredconvergence}).\\

\noindent\textbf{Uniqueness of solutions to the infinite horizon
BSDE system (\ref{IBSDE1})}

%

The proof of uniqueness is similar to the proof of the existence, so
we only sketch it. Let $(Y,Z)$ and $(\tilde{Y},\tilde{Z})$ be two
solutions to the infinite horizon BSDE system (\ref{IBSDE1}). Denote
$\delta Y_t^i=Y_t^i-\tilde{Y}_t^i$, and $\delta
Z_t^{i}=Z_t^{i}-\tilde{Z}_t^{i}$. Then $(\delta Y,\delta Z)$
satisfies the following equation:
$$\delta Y^i_t=\delta Y_{T}^i+\int_t^T F_s^i(Y_s,Z_s)-F_s^i(\tilde{Y}_s,\tilde{Z}_s)ds-\int_t^T\delta Z^i_sdW_s,$$
where we denote
$$F_s^i(y,z)=f^i_s(y,z)+\lambda\max\{0,\mathcal{M}y^i-y^i\}$$
for any $(y,z)\in\mathbb{R}^d\times\mathbb{R}^{d\times n}$.

Apply It\^o's formula to $e^{2at}|\delta Y_t|^2$,
\begin{align}\label{ITO111}
e^{2at}|\delta Y_t|^2
=&\ e^{2aT}|\delta Y_{T}|^2-\int_t^{T}2a e^{2a s}|\delta Y_s|^2ds\nonumber\\
&\ +\int_t^T2e^{2a
s}(\delta Y_s)^*\left(F_s(Y_s,Z_s)-F_s(\tilde{Y}_s,\tilde{Z}_s)\right)ds\nonumber\\
&\ -\int_t^T2e^{2a s}(\delta Y_s)^*\delta Z_sdW_s-\int_t^Te^{2a
s}||\delta Z_s||^2ds.
\end{align}
Using the monotone condition and the Lipschitz condition in
Assumption \ref{Assumption1}, we get
\begin{align}\label{fundamentalinequality }
&e^{2at}|\delta Y_t|^2+(1-\frac{1}{\delta_1^2})\int_t^Te^{2a
s}||\delta Z_s||^2ds\nonumber\\
\leq&\ e^{2aT}|\delta
Y_T|^2-\left(2a+2a_1-\delta_1^2a_2^2-(d+3)\lambda\right)\int_t^Te^{2as}|\delta
Y_s|^2ds-\int_t^T2e^{2a s}(\delta Y_s)^*\delta Z_sdW_s.
\end{align}

Choosing $\delta_1^2=\delta$ and taking expectation on
(\ref{fundamentalinequality }) at $t=0$, we obtain
$$(1-\frac{1}{\delta})\mathbf{E}\left[\int_0^Te^{2a
s}||\delta Z_s||^2ds\right]\leq \mathbf{E}\left[e^{2aT}|\delta
Y_T|^2\right].$$ Since
$\lim_{T\uparrow\infty}\mathbf{E}\left[e^{2aT}|\delta
Y_T|^2\right]=0$, we conclude that
$$
\mathbf{E}\left[\int_0^{\infty}e^{2a s}||\delta
Z_s||^2ds\right]=0.$$

On the other hand, choosing $\delta_1^2=\delta$, taking supremum
over $t\geq 0$, and take expectation on (\ref{fundamentalinequality
}), we get
$$\mathbf{E}\left[\sup_{t\geq 0}e^{2at}|\delta Y_t|^2\right]\leq
\mathbf{E}\left[e^{2aT}|\delta Y_T|^2\right]+
\mathbf{E}\left[\sup_{t\geq 0} |\int_t^{T}2e^{2a s}(\delta
Y_s)^*\delta Z_sdW_s|\right].$$ Since
$\lim_{T\uparrow\infty}\mathbf{E}\left[e^{2aT}|\delta
Y_T|^2\right]=0$, and the martingale is uniformly integrable, we
conclude that
$$\mathbf{E}\left[\sup_{t\geq 0}e^{2at}|\delta Y_t|^2\right]=0.$$

\subsection{Proof of Proposition \ref{Propositionlineargrowth}}

For any switching strategy $u\in\mathcal{K}_i(0,\lambda)$, the
running profit of (\ref{optimalswitching3}) is dominated by
\begin{equation}\label{inequ1}
\mathbf{E}\left[\int_0^{\infty}e^{-a_1s}h^{u_s}(X_s^x)\right]\leq
C\mathbf{E}\left[\int_0^{\infty}e^{-a_1s}X_s^xds\right]=\frac{Cx}{a_1-b},
\end{equation}
where we used $h(0)=0$ and the Lipschitz continuity of $h(\cdot)$.
The switching cost of (\ref{optimalswitching3}) is dominated by
\begin{equation}\label{inequ2}
-\sum_{k=1}^{M}e^{-a_1T_k}g^{\alpha_{k-1},\alpha_k}\leq\max_{j=1,2}[-g^{ij}]
\end{equation}
for any $M\geq 1$, which can be proved by induction as in
\cite{LyVath}. Indeed, (\ref{inequ2}) obviously holds for $M=1$.
Suppose (\ref{inequ2}) holds for $M\geq 1$, we consider $M+1$. When
$g^{\alpha_M,\alpha_{M+1}}\geq 0$, (\ref{inequ2}) obviously holds as
$a_1\geq 0$. When $g^{\alpha_M,\alpha_{M+1}}\leq 0$, we have
\begin{align*}
-\sum_{k=1}^{M+1}e^{-a_1T_k}g^{\alpha_{k-1},\alpha_k}&
=-\sum_{k=1}^{M-1}e^{-a_1T_k}g^{\alpha_{k-1},\alpha_k}
-(e^{-a_1T_{M}}g^{\alpha_{M-1},\alpha_M}+e^{-a_1T_{M+1}}g^{\alpha_{M},\alpha_{M+1}})\\
&\leq
-\sum_{k=1}^{M-1}e^{-a_1T_k}g^{\alpha_{k-1},\alpha_k}-e^{-a_1T_M}(g^{\alpha_{M-1},\alpha_M}+g^{\alpha_{M},\alpha_{M+1}})\\
&\leq
-\sum_{k=1}^{M-1}e^{-a_1T_k}g^{\alpha_{k-1},\alpha_k}-e^{-a_1T_M}g^{\alpha_{M-1},\alpha_{M+1}}
\leq \max_{j=1,2}[-g^{ij}].
\end{align*}

Combining (\ref{inequ1}) and (\ref{inequ2}) gives us
$$\mathbf{E}\left[\int_0^{\infty}e^{-a_1s}h^{u_s}(X_s^x)ds-\sum_{k\geq
1}e^{-a_1T_k}g^{\alpha_{k-1},\alpha_k}\right]\leq
\frac{Cx}{a_1-b}+\max_{j=1,2}[-g^{ij}].$$ From the arbitrariness of
$u\in\mathcal{K}_i(0,\lambda)$, we obtain that the RHS of the above
inequality is the upper bound of $v^i(\cdot)$.

On the other hand, for $j=1,2$, choose a switching strategy as
$$u_s=i\mathbf{1}_{[0,T_1]}(s)+j\mathbf{1}_{[T_1,\infty)}(s).$$
Then, since $h\geq 0$,
$$\mathbf{E}\left[\int_0^{\infty}e^{-a_1s}h^{u_s}(X_s^x)ds-\sum_{k\geq
1}e^{-a_1T_k}g^{\alpha_{k-1},\alpha_k}\right]\geq
\mathbf{E}\left[-e^{a_1T_1}g^{ij}\right]=\frac{-\lambda}{a_1+\lambda}g^{ij}\geq
\frac{\lambda}{a_1+\lambda}\max_{j=1,2}\{-g^{ij}\}.$$ Once again,
from the arbitrariness of $u\in\mathcal{K}_i(0,\lambda)$, we obtain
that the RHS of the above inequality is the lower bound of
$v^i(\cdot)$, so $v^i(\cdot)$ has at most linear growth.

We conclude by showing the Lipschitz continuity of $v^i(\cdot)$.
Indeed, for $x,\bar{x}\in\mathbb{R}^+$,
\begin{align*}
|v^i(x)-v^i(\bar{x})|&\leq \sup_{u\in\mathcal{K}_i(0,\lambda)}
\mathbf{E}\left[\int_0^{\infty}e^{-a_1s}|h^{u_s}(X_s^x)-h^{u_s}(X_s^{\bar{x}})|ds\right]\\
&\leq\sup_{u\in\mathcal{K}_i(0,\lambda)}\mathbf{E}\left[\int_0^{\infty}
e^{-a_1s}C|X_s^x-X_s^{\bar{x}}|ds\right]\\
&=\mathbf{E}\left[\int_0^{\infty}
e^{-a_1s}C|x-\bar{x}|e^{(b-\frac12\sigma^2)s+\sigma W_s}ds\right]\\
&=\frac{C}{a_1-b}|x-\bar{x}|.
\end{align*}

\subsection{Proof of Proposition \ref{propositioncompare}}

We only prove the subsolution property, as the supersolution
property is similar. The proof relies on the comparison principle
for the ODE (\ref{ode1}) on a finite interval. For any
$x^0\in(0,\infty)$ and $\epsilon>0$, since $b<a_1$, we can choose
$p>0,q>1$ such that
$$\frac12\sigma^2p(p+1)-bp-a_1\leq 0.$$
$$\frac12\sigma^2q(q-1)+bq-a_1\leq 0.$$
With such $p,q$, we then choose $C_1,C_2,C_3>0$ such that
$$C_1(x^0)^{q}+\frac{C_2}{(x^0)^p}+C_3\leq \epsilon.$$
Now we consider the following auxiliary function
$$\underline{w}^i(x)=\underline{v}^i(x)-\left(C_1x^q+\frac{C_2}{x^p}+C_3\right).$$
Then we have
$$(-\mathcal{L}+a_1)\underline{w}^i(x)=(-\mathcal{L}+a_1)\underline{v}^i(x)+C_1x^q\left(\frac{1}{2}\sigma^2q(q-1)+bq-a_1\right)+\frac{C_2}{x^p}\left(\frac12\sigma^2p(p+1)-bp-a_1\right)-a_1C_3.
$$
Since $\underline{v}^i(\cdot)$ is the subsolution to the ODE
(\ref{ode1}) and the terms involving $p,q$ are negative by the
choice of $p,q$, we obtain that
$(-\mathcal{L}+a_1)\underline{w}^i(x)\leq 0$ for $x\in(0,\infty)$.

On the other hand, by the linear growth condition of
$\underline{v}^i(\cdot)$, there exits $\underline{x}$ and
$\overline{x}$ such that $x^0\in[\underline{x},\overline{x}]$ and
$\underline{w}^i(\overline{x})\leq 0$ and
$\underline{w}^i(\underline{x})\leq 0$. Then the comparison
principle for the ODE (\ref{ode1}) on a finite interval
$[\underline{x},\overline{x}]$ implies that
$\underline{w}^i(x^0)\leq 0$, and therefore,
$\underline{v}^i(x^0)\leq \epsilon$. Letting $\epsilon\downarrow 0$,
we obtain $\underline{v}^i(x^0)\leq 0$ for any $x^{0}\in(0,\infty)$.


\begin{figure}
[!htb]\label{Fig1}\centering
\includegraphics[totalheight=18cm]{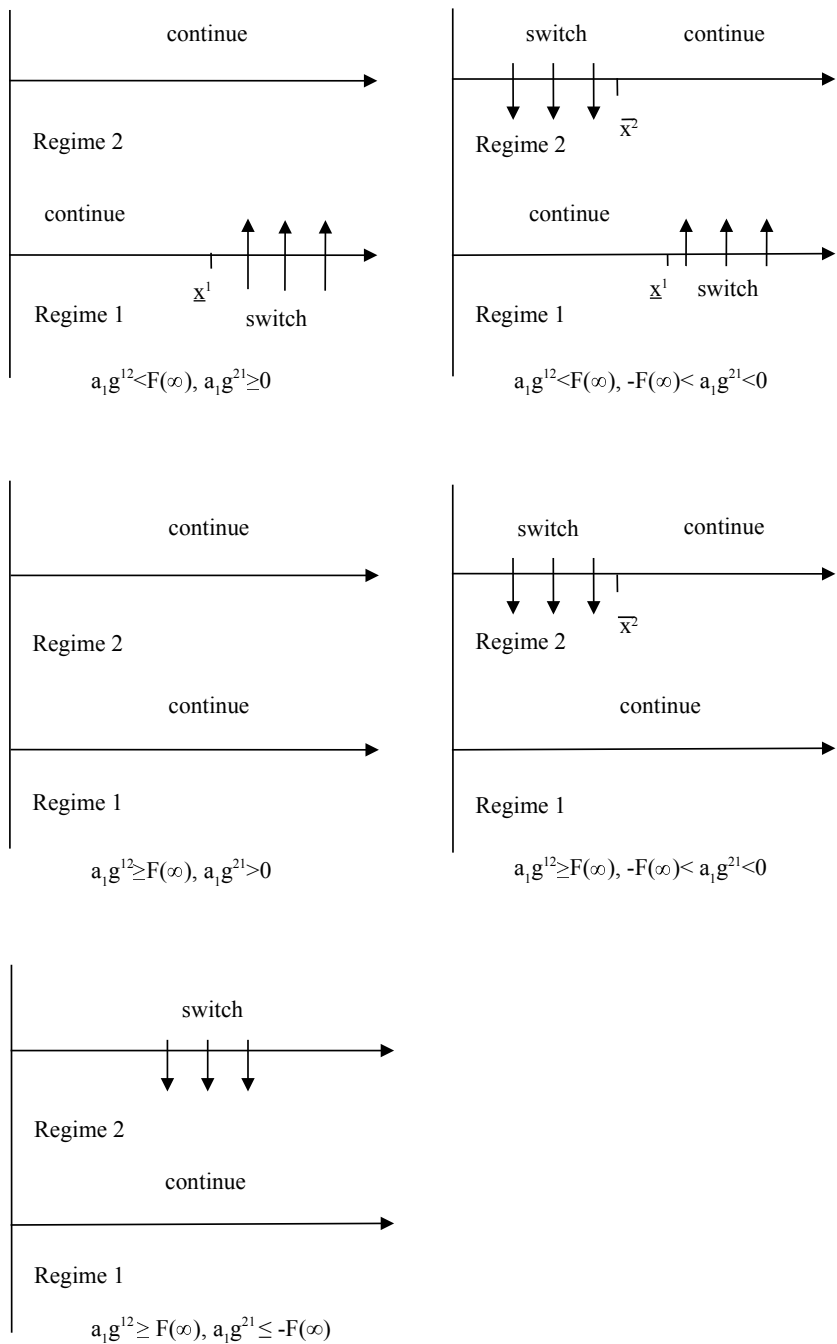}
\caption{The structure of switching regions}
\end{figure}


\end{document}